\author{%
  Charles Bertucci $^1$, Jean-Michel Lasry $^1$, 
 Pierre-Louis Lions$^{1,2}$
  }
\newtheorem{Theorem}{Theorem}
\newtheorem{Lemma}{Lemma}
\newtheorem{Rem}{Remark}
\newtheorem{Prop}{Proposition}
\title{Some remarks on Mean Field Games}
\thanks{This work is supported by a grant from the Fondation CFM pour la recherche.\\
$^1$ :Universit\'e Paris-Dauphine, PSL Research University,UMR 7534, CEREMADE, 75016 Paris, France\\
$^2$ : Coll\`ege de France, 3 rue d'Ulm, 75005, Paris, France}
\date{} 
\begin{document}
\maketitle
\begin{abstract}
We study in this paper three aspects of Mean Field Games. The first one is the case when the dynamics of each player depend on the strategies of the other players. The second one concerns the modeling of "noise" in discrete space models and the formulation of the Master Equation in this case. Finally, we show how Mean Field Games reduce to agent based models when the intertemporal preference rate goes to infinity, i.e. when the anticipation of the players vanishes.
\end{abstract}
\tableofcontents
\newpage
\section*{Introduction}
This paper presents some results on three topics in the theory of Mean Field Games (MFG). It is divided in three parts, independent of each other for the most part. We present first some results on the uniqueness of solutions for MFG in the case of coupled dynamics. Then, we introduce MFG models with noise in the discrete state space case. We give some conditions for the problem to be well-posed. Finally we discuss the limit of MFG models as the intertemporal preference rate of the players goes to infinity. Those three subjects are described in details later on in each section. We briefly recall now a few well known facts on MFG.\\
\\
MFG have been introduced by the last two authors in \citep{lasry2006jeux, lasry2006jeux2, lasry2007mean} and the field has known a tremendous development in the past ten years. MFG models address differential games involving an infinite number of indistinguishable players. In the absence of a common noise, to find a Nash equilibrium of the game reduces to solve a system of forward backward partial differential equations (PDE). This system consists in a backward Hamilton-Jacobi-Bellman equation, which is solved by the value function of the differential game a generic player is facing, and a forward Fokker-Planck equation which models the evolution of the population. Namely in the case of a game set on the $d$ dimensional torus $\mathbb{T}^d$ with a time horizon $T$, the MFG reduces to systems of the following kind

\begin{equation}\label{MFG}
\begin{cases}
-\partial_t u - \nu \Delta u + H(x, \nabla_x u, m) = 0 \text{ in } (0,T) \times \mathbb{T}^d;\\
\partial_t m - \nu \Delta m - div(D_pH(x, \nabla_x u, m)m) = 0 \text{ in } (0,T)\times \mathbb{T}^d;\\
u(T) = G(x, m) ; m(0) = m_0 \text{ in } \mathbb{T}^d;
\end{cases}
\end{equation}
where $H$ is the Hamiltonian of the optimal control problem for the players, $G$ the terminal cost of this problem and $m_0$ the initial distribution of players. The function $u$ represents here the value function for a generic player and $m$ is the density of players. Uniqueness (under suitable monotonicity conditions) and existence have been proved for this system in \citep{lasry2007mean, lions2007cours}. Numerical methods are being developed, let us cite \citep{achdou2010mean,briceno2016proximal} for examples of this growing literature. Let us also mention the questions of long time average \citep{cardaliaguet2012long, cardaliaguet2017long} or learning \citep{cardaliaguet2015learning}. In some very particular cases (the so-called "potential case") solutions of the MFG system can be obtain from a PDE optimal control problem \citep{benamou2017variational}. This approach is particularly successful to find weak solutions of (\ref{MFG})  \citep{cardaliaguet2015second}. This particular class of MFG is also somewhat similar to the optimal transport theory. Moreover a major axis of development of MFG is the probabilistic interpretation of such differential games. We refer to \citep{carmona2013probabilistic, lacker2015mean} for examples of such a point of view and to \citep{carmona2017probabilistic, carmona2017probabilistic2} for a complete presentation of this probabilistic approach. Let us also recall that MFG are not limited to continuous control problem and that other setting have been studied like optimal stopping \citep{bertucci2017optimal, nutz2016mean, carmona2016mean} or impulse control \citep{bertucci2018fokker} for instance. Moreover we insist that in the presence of a common noise, the reduction of the MFG to the system (\ref{MFG}) collapses and the study of the master equation is crucial for the understanding of the MFG. We refer the reader to \citep{cardaliaguet2015master, lions2007cours} for a detailed study of the master equation and to \citep{cardaliaguet2017long, delarue2018fluctuations} for examples of applications.

\section{ Uniqueness in strongly coupled Mean Field Games}
\subsection{Formulation of the problem}
In this section, we are interested in MFG in the case when the coupling between the players depend on their strategies. We refer to \citep{lions2007cours} for details on this question. In the case without common noise, the study of this strong coupling is the study of  MFG system in which the Hamiltonian of the problem now depends on the measure associated to the distribution of the strategies of the players. We work here on the $d$ dimensional torus $\mathbb{T}^d$ and the system we are interested in is :
\begin{equation}\label{eqcarda}
\begin{cases}
-\partial_t u - \nu \Delta u + H(x,\nabla u, m , µ) = 0  \text{ in } (0,T) \times \mathbb{T}^d;\\
\partial_t m - \nu \Delta m - div(D_pH(x,\nabla u , m , µ)m) = 0 \text{ in } (0,T)\times \mathbb{T}^d;\\
m(0) = m_0 ; u(T) = \phi(m) \text{ in } \mathbb{T}^d;\\
\end{cases}
\end{equation}
where $H : \mathbb{T}^d \times \mathbb{R}^d \times \mathcal{P}(\mathbb{T}^d)\times  \mathcal{P}(\mathbb{R}^d) \to \mathbb{R}$ denotes the Hamiltonian associated with the optimal control problem the players are facing. It is assumed to be smooth and $D_pH$ stands for the derivative with respect to its second variable. We denote by $u$ the value function of the players, $m$ the measure which describes their distribution in the state space $\mathbb{T}^d$ and $µ$ the measure associated to the controls of the players. Let us remark that because $µ$ is the measure of the optimal controls used by the players, it satisfies the following relation :
\begin{equation}\label{fixpointmu}
µ = (-D_pH(x, \nabla u , m , µ))_{\# }m;
\end{equation}
where $G_{\#} m$ stands for the image measure of $m$ by the measurable application $G$. Existence of solutions of such a system was proven for particular Hamiltonians in \citep{Gomes2014existence} ; we refer to \citep{cardaliaguet2016mean} for a more general result of existence for (\ref{eqcarda}) and to \citep{carmona2015probabilistic, carmona2017probabilistic} for the study of such a coupling in a probabilistic setup. In general, uniqueness of such solutions is not known.

Before presenting the main result of this part, we make some preliminaries observations. More precisely, we state that the quantity
\begin{equation}\label{quantity}
\int_{\mathbb{T}^d}  \nabla u m
\end{equation}
is conserved in time under the assumption that the Hamiltonian $H$ is of the form : $H(\nabla u,m)(x) = \tilde{H}(\nabla u(x)) - f(m(x))$. This result is quite simple but we believe it is worth mentioning as it does not seem to appear in the literature. And it shows that there may be properties enjoyed by the solutions which may play a role in uniqueness properties. Moreover, it will be useful for the study of the example we give at the end of this part.

\begin{Lemma}\label{conservation}
Assume that $H$ does not depend on the space variable $x$ and that it is of the form $H(\nabla u,m)(x) = \tilde{H}(\nabla u(x)) - f(m(x))$, then for any solution $(u,m)$ of (\ref{MFG}) the quantity 
\begin{equation}\label{quantity}
\int_{\mathbb{T}^d}  \nabla u m
\end{equation}
does not depend on time.
\end{Lemma}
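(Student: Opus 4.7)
The plan is to differentiate the Hamilton--Jacobi--Bellman equation componentwise in $x$, pair it with the Fokker--Planck equation via an integration by parts on the torus, and check that the resulting terms cancel term by term. Concretely, writing $H(\nabla u, m)(x) = \tilde H(\nabla u) - f(m)$, I would first apply $\partial_{x_i}$ to the HJB equation to obtain
\begin{equation*}
\partial_t \partial_i u = -\nu \Delta \partial_i u + D_p\tilde H(\nabla u)\cdot \nabla \partial_i u - f'(m)\, \partial_i m,
\end{equation*}
noting that no extra term appears precisely because $\tilde H$ and $f$ have no explicit $x$-dependence.

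Next I would compute
\begin{equation*}
\frac{d}{dt}\int_{\mathbb{T}^d} \partial_i u \, m\, dx \;=\; \int_{\mathbb{T}^d} \partial_t \partial_i u \cdot m\, dx + \int_{\mathbb{T}^d} \partial_i u \cdot \partial_t m\, dx,
\end{equation*}
substitute the expression above for $\partial_t\partial_i u$ and substitute $\partial_t m = \nu \Delta m + \mathrm{div}(D_p\tilde H(\nabla u)\, m)$ from the Fokker--Planck equation. On the torus all boundary terms vanish, so I can freely integrate by parts. The plan is to show the three matching pairs cancel: the diffusion pair cancels because the Laplacian is self-adjoint, giving $-\nu\int \Delta\partial_i u\, m = -\nu \int \partial_i u\, \Delta m$; the drift pair cancels because $\int D_p\tilde H(\nabla u)\cdot \nabla \partial_i u\, m\, dx = -\int \partial_i u\,\mathrm{div}(D_p\tilde H(\nabla u)\, m)\, dx$ by one integration by parts.

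The only remaining contribution is the coupling term $-\int f'(m)\, \partial_i m\, m\, dx$. Here I would introduce the primitive $F$ defined by $F'(s)=s f'(s)$, so that $f'(m)\,m\,\partial_i m = \partial_i F(m)$, and conclude that this integral vanishes since it is the integral of a total $x_i$-derivative of a periodic function. Putting the three cancellations together yields $\frac{d}{dt}\int \partial_i u\, m\, dx = 0$ for every $i$, which is the claim.

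I do not expect any real obstacle; the only mildly delicate point is the treatment of the coupling term, where the separable structure $H = \tilde H(\nabla u) - f(m)$ is essential in order to recognize $f'(m)\, m\, \nabla m$ as an exact gradient. Without separability one would pick up a residue of the form $\int D_m H \cdot \nabla m\, m\, dx$ that does not vanish in general, which is precisely why the hypothesis on the structure of $H$ cannot be relaxed in this elementary argument. A brief regularity remark (enough smoothness of $u$ and $m$ to justify differentiating the HJB equation and the integrations by parts) would close the argument.
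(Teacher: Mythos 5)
Your proposal is correct and is essentially the paper's argument in differentiated form: the paper multiplies the undifferentiated HJB equation by $\partial_{x_i} m$ and integrates over $(0,t)\times\mathbb{T}^d$, which after integration by parts produces exactly your three cancellations (the paper uses $f(m)\partial_{x_i}m=\partial_{x_i}F(m)$ with $F'=f$, whereas you differentiate first and use $mf'(m)\partial_{x_i}m=\partial_{x_i}F(m)$ with $F'(s)=sf'(s)$). The only marginal difference is that your route asks for $f\in C^1$ while the paper's avoids differentiating $f$; in the smooth setting of the lemma this is immaterial.
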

\begin{proof}
We only prove the statement in the case of a smooth solution $(u,m)$ as it easily extends to weaker notions of solutions. Multiplying by $\partial_{x_i} m$ the equation satisfied by $u$ and integrating the PDE satisfied by $u$ on $(0,t) \times \mathbb{T}^d$, we obtain
\[
\begin{aligned}
0  &= \int_0^t \int_{\mathbb{T}^d} (-\partial_t u - \nu \Delta u + \tilde{H}(\nabla u) - f(m))\partial_{x_i} m ;\\
& \begin{aligned} =\int_0^t \int_{\mathbb{T}^d} (-\partial_t m &+ \nu \Delta m) \partial_{x_i} u + \tilde{H}( \nabla u)\partial_{x_i} m - \int_{\mathbb{T}^d}                  f(m)\partial_{x_i} m \\
                      &+ \int_{\mathbb{T}^d} (u(0) \partial_{x_i} m_0 - u(t) \partial_{x_i} m(t)) ;\end{aligned}\\
& = \int_0^t \int_{\mathbb{T}^d}  -div(D_p\tilde{H}(\nabla u)m) \partial_{x_i} u +  \tilde{H}( \nabla u) \partial_{x_i} m + \int_{\mathbb{T}^d} (u(0) \partial_{x_i} m_0 - u(t) \partial_{x_i} m(t)).
\end{aligned}
\]
We then deduce by integrating by parts all the terms of the last line :
\[
\begin{aligned}
0 &=  \int_0^t \int_{\mathbb{T}^d} m D_p\tilde{H}(\nabla u) \cdot \partial_{x_i} \nabla u - D_p\tilde{H}(\nabla u) \cdot m \partial_{x_i} \nabla u  - \int_{\mathbb{T}^d} (m_0 \partial_{x_i} u(0) - m(t) \partial_{x_i} u(t));\\
 & =  -\int_{\mathbb{T}^d} (m_0 \partial_{x_i} u(0) - m(t) \partial_{x_i} u(t)).
\end{aligned}
\]
\end{proof}
\begin{Rem}
We thus see that the invariance by translation of the Hamiltonian implies that (\ref{quantity}) is independent of time. Such a quantity is closely related to the average control of the players. Hence such a lemma leads us to think that one can obtain information on the average control using assumptions on the space dependence of the Hamiltonian.
\end{Rem}
As we are led to distinguish local and global dependence of the Hamiltonian in $\nabla u$, we introduce the following notation :
\begin{itemize}
\item We note $G(f)$ when the dependence of the function $G$ in the function $f$ is local, i.e. $G(f)(x) = G(f(x))$.
\item We note $G(f(\cdot))$ for a general dependence of the function $G$ in the function $f$.
\end{itemize}
We shall only use this notation in the case when there might be an ambiguity.

\subsection{A uniqueness result}
We present here a uniqueness result for the system :
\begin{equation}\label{ourmodel2}
\begin{cases}
-\partial_t u - \nu \Delta u + H(\nabla u) - G(\nabla u ( \cdot), m (\cdot)) \cdot \nabla u = f(u,m) \text{ in } (0,T) \times \mathbb{T}^d;\\
u(T) = \phi(m(T)) \text{ in } \mathbb{T}^d;\\
\partial_t m - \nu \Delta m - div(m(D_p H ( \nabla u) - G(\nabla u ( \cdot), m (\cdot)) )) = g(u,m) \text{ in } (0,T) \times \mathbb{T}^d;\\
m(0) = m_0 \text{ in } \mathbb{T}^d.
\end{cases}
\end{equation}
Such a system models a MFG in which the players are "pushed" by the mean field control of the other players. We recall the usual conditions under which we expect uniqueness to hold for solutions of (\ref{ourmodel2}) when $G = 0$:
\begin{itemize}
\item The hamitlonian $H$ is convex.
\item The terminal cost $\phi$ is monotone.
\item There exist $\alpha$ and $\beta$ such that $(g,f) +(\alpha m, \beta u)$ is monotone in $(u,m)$.
\item Either $H$ is strictly convex, $(g,f) +(\alpha m, \beta u)$ is strictly monotone in $(u,m)$, $\phi$ is strictly monotone or $(g,f)$ is local in $(u,m)$.
\end{itemize}
We say that the hypothesis $(H1)$ is satisfied if those four requirements are satisfied. We also recall the notion of monotonicity for the sake of completeness. To define this notion properly one usually needs to make precise the functions spaces on which $f$ and $g$ are defined and take their values. To fix ideas, we recall the notion of monotonicity in the case of maps of $L^2$ : a map $F$ from $L^2(\Omega)$ into itself is said to be (resp. strictly) monotone if for any $u_1, u_2 \in L^2(\Omega)$ :
\[
\int_{\Omega}(F(u_1) - F(u_2))(u_1 - u_2) \geq 0 \text{ (resp } > 0 \text{ if } u_1 \ne u_2 \text{ )}
\]
We define by $\mathcal{T}$ the group of transformation :
\[
\mathcal{T} := \{ (v, m) \rightarrow (v \circ \tau_x, m\circ \tau_x) | \tau_x \text{ is the translation by } x \in \mathbb{T}^d\}
\]
We are now able to state the following result :
\begin{Theorem}\label{uniquecontrol}
Let us assume that $G$ and $\phi$ are invariant by translations, i.e. that they do not depend on the space variable $x$ and that $G$ is invariant under the group of transformation $\mathcal{T}$. Then under the usual condition that $(H1)$ is satisfied, there exists at most one solution $(u,m)$ of (\ref{ourmodel2}).
\end{Theorem}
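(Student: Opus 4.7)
The plan is to reduce the problem to the standard MFG uniqueness (the $G \equiv 0$ case) by a time-dependent change of spatial coordinates that absorbs the $G$-drift. The crucial observation is that since $G$ does not depend on $x$, for any solution $(u,m)$ the quantity $\gamma(t) := G(\nabla u(t,\cdot), m(t,\cdot))$ is simply a time-dependent vector in $\mathbb{R}^d$; hence $G(\nabla u(\cdot), m(\cdot)) \cdot \nabla u$ acts in the $u$-equation as a spatially uniform drift, and in the $m$-equation it appears as $\mathrm{div}(m\gamma(t)) = \gamma(t)\cdot\nabla m$.

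Given two solutions $(u_i,m_i)$, $i=1,2$, set $\gamma_i(t) := G(\nabla u_i(t,\cdot), m_i(t,\cdot))$, $y_i(t) := \int_0^t \gamma_i(s)\,ds$, and define
\begin{equation*}
\tilde{u}_i(t,x) := u_i(t, x+y_i(t)), \qquad \tilde{m}_i(t,x) := m_i(t, x+y_i(t)).
\end{equation*}
A direct computation, using $\dot{y}_i = \gamma_i$ and the fact that $\gamma_i$ is constant in $x$, shows that the extra $\pm \gamma_i\cdot\nabla$ terms produced by the time derivative of the translation cancel exactly the $G$-drift contributions in both equations. Thus $(\tilde{u}_i, \tilde{m}_i)$ solves the classical MFG system
\begin{equation*}
\begin{cases}
-\partial_t \tilde{u}_i - \nu \Delta \tilde{u}_i + H(\nabla \tilde{u}_i) = f(\tilde{u}_i, \tilde{m}_i),\\
\partial_t \tilde{m}_i - \nu \Delta \tilde{m}_i - \mathrm{div}(\tilde{m}_i D_p H(\nabla \tilde{u}_i)) = g(\tilde{u}_i, \tilde{m}_i),
\end{cases}
\end{equation*}
with $\tilde{m}_i(0) = m_0$ (because $y_i(0)=0$) and $\tilde{u}_i(T) = \phi(\tilde{m}_i(T))$ (using translation equivariance of $\phi$, which is the content of its invariance by translations).

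I then apply the classical Lasry--Lions uniqueness under $(H1)$ to the system above, obtaining $(\tilde{u}_1, \tilde{m}_1) = (\tilde{u}_2, \tilde{m}_2)$. Because of the $\mathcal{T}$-invariance of $G$ and the identity $(\nabla u_i, m_i)(t, \cdot) = (\nabla \tilde{u}_i, \tilde{m}_i)(t, \cdot - y_i(t))$, one has
\begin{equation*}
\gamma_i(t) = G(\nabla u_i(t,\cdot), m_i(t,\cdot)) = G(\nabla \tilde{u}_i(t,\cdot), \tilde{m}_i(t,\cdot)),
\end{equation*}
so $\gamma_1 \equiv \gamma_2$, hence $y_1 \equiv y_2$, and therefore $(u_1,m_1) = (u_2,m_2)$.

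The main obstacle is the careful bookkeeping for the change of variables: one has to verify that the translation equivariance of $\phi$ actually transports the terminal condition to $\phi(\tilde{m}_i(T))$, and that the invariance of $G$ under $\mathcal{T}$ allows identification of $\gamma_i$ before and after translation (both are exactly the hypotheses assumed). A secondary technical point is the behaviour of the coupling $(g,f)$ under the time-dependent shift: when $(g,f)$ is local the transformation is immediate, while for nonlocal monotone couplings one implicitly uses that the functionals in $(H1)$ are defined in the translation-equivariant way standard in the Lasry--Lions framework.
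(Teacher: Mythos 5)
Your proposal is correct and follows essentially the same route as the paper: the time-dependent spatial shift by $\int_0^t G(\nabla u_i(s),m_i(s))\,ds$ reduces each solution to the standard MFG system, uniqueness under $(H1)$ identifies the shifted solutions, and the $\mathcal{T}$-invariance of $G$ identifies the shifts themselves. Your bookkeeping (including the remark on the translation equivariance of nonlocal couplings $(g,f)$) is in fact slightly more explicit than the paper's, but the argument is the same.
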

\begin{proof}
We denote by $(u_1, m_1)$ and $(u_2, m_2)$ two solutions of the system $(\ref{ourmodel2})$. For $i \in \{1, 2\}$ we define $(\tilde{u}_i, \tilde{m}_i)$ by
\[
(\tilde{u}_i, \tilde{m}_i)(t,x) = (u,m) (t, x + \int_0^t G(\nabla u_i(s), m_i(s))ds).
\]
Because $G$ and $\phi$ are invariant by translations, we deduce that for $i \in \{1,2\}$, $(\tilde{u}_i, \tilde{m}_i)$ is a solution of the MFG system :
\[
\begin{cases}
-\partial_t \tilde{u}_i - \nu \Delta \tilde{u}_i + H(\nabla \tilde{u}_i) = f(\tilde{u}_i, \tilde{m}_i) \text{ in } (0,T) \times \mathbb{T}^d;\\
\partial_t \tilde{m}_i - \nu \Delta \tilde{m}_i - div(D_p H(\nabla \tilde{u}_i) \tilde{m}_i) = g(\tilde{u}_i, \tilde{m}_i) \text{ in } (0,T) \times \mathbb{T}^d;\\
\tilde{m}_i(0) = m_0 ; \tilde{u}_i (T) = \phi(\tilde{m}_i(T)) \text{ in } \mathbb{T}^d.
\end{cases}
\]
Let us note that the assumption that $H$ does not depend explicitly on the space variable is crucial. Because the assumption (H1) holds, there is a unique solution of this system and $(\tilde{u}_1, \tilde{m}_1) = (\tilde{u}_2, \tilde{m}_2)$. Now let us remark that because $G$ is invariant by the transformation which sends $(u,m)$ on $(\tilde{u}, \tilde{m})$ we obtain that 
\[
G(\nabla u_1, m_1) = G(\nabla u_2, m_2) = G(\nabla \tilde{u}, \tilde{m}).
\]
Thus, $(u_1, m_1) = (u_2, m_2)$ and there is a unique solution of (\ref{ourmodel2}).
\end{proof}

\subsection{A simple example}
The previous proof relies strongly on the form of the Hamiltonian and on the invariance by translations of the problem. We now discuss to what extent those assumptions are needed. First, it is very natural to allow the measure of controls $µ$ in (\ref{eqcarda}) to appear in the Hamiltonian only through terms like $G(\nabla u, m)$ because we are interested in a mean field interaction with the control of the other players. Thus terms of the form 
\[
G(\nabla u(\cdot), m(\cdot)) = \int_{\mathbb{T}^d}g(\nabla u(y))m(y)dy
\]
seem to be quite general for applications. See the example studied in \citep{cardaliaguet2016mean} for example. Moreover such a term satisfies the assumption of theorem \ref{uniquecontrol}. However it is true that the assumption that this term is linear in the "$p$" variable for the Hamiltonian is rather strong (i.e. that $G$ appears in the Hamiltonian only through $G(\nabla u, m) \cdot \nabla u$).\\
\\
Concerning the question of the invariance by translation, the following example shows that this assumption can be extended in some cases. But it also shows that there is a complex interaction between the uniqueness and the type of dependence of the Hamiltonian in the control of the other players. Let us introduce the following system which satisfies all the assumptions of  the theorem \ref{uniquecontrol} except for the fact that the terminal cost $\phi$ now depends on the space variable.

\begin{equation}\label{example1}
\begin{cases}
-\partial_t u -\nu \Delta u+ \frac{1}{2}| \nabla u |^2 -\lambda (\int \nabla u m) \cdot \nabla u = 0 \text{ in } (0,T)\times \mathbb{T}^d; \\
\partial_t m -\nu \Delta m + div( (\nabla u - \lambda (\int \nabla u m)) m) = 0 \text{ in } (0,T)\times \mathbb{T}^d; \\
u(T) = \phi(x) ; m(0) = m_0(x) \text{ in } \mathbb{T}^d.
\end{cases}
\end{equation}
Even though this section is not particularly concerned with the question of existence, let us mention that a solution of such a system exists as soon as $\phi$ is a Lipschitz function. The following lemma gives a general property satisfied by the solutions of (\ref{example1}). Proposition \ref{example2} details a precise example of a function $\phi$  which yields uniqueness even though it is not constant :

\begin{Lemma}\label{eqA}
Let us denote by $(u,m)$ a solution of (\ref{example1}), we define $A = \int_{\mathbb{T}^d} \nabla u m$ and we denote by $u_0$ the solution of the HJB equation
\[
\begin{cases}
-\partial_t u_0 -\nu \Delta u_0 + \frac{1}{2} |\nabla u_0|^2 = 0 ;\\
u_0(T) = \phi.
\end{cases}
\]
Then the following holds
\begin{equation}\label{eqAprop}
A = \int_{\mathbb{T}^d} \nabla u_0( x + \lambda A T,0) m_0(x) dx.
\end{equation}
\end{Lemma}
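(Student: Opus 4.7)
The plan is to reuse the change-of-frame trick from the proof of Theorem \ref{uniquecontrol}, combined with the conservation property of Lemma \ref{conservation} applied in the shifted frame.

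First I would set $A(t) = \int_{\mathbb{T}^d} \nabla u(t,x) m(t,x) dx$ and $\xi(t) = \lambda \int_0^t A(s) ds$, and define the shifted pair $\tilde{u}(t,x) = u(t, x+\xi(t))$, $\tilde{m}(t,x) = m(t, x+\xi(t))$. A direct computation using $\dot\xi(t) = \lambda A(t)$ and the chain rule shows that the drift term $-\lambda A \cdot \nabla u$ in the HJB equation for $u$ is exactly cancelled by the extra $\lambda A \cdot \nabla \tilde u$ coming from $\partial_t \tilde u$; the same cancellation occurs in the Fokker--Planck equation. Thus $(\tilde u, \tilde m)$ solves the decoupled system with standard Hamiltonian $\tfrac12|\nabla \tilde u|^2$, initial datum $\tilde m(0,\cdot) = m_0$, and terminal datum $\tilde u(T,x) = \phi(x+\xi(T))$. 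Since there is no coupling, $\tilde u$ solves the pure HJB equation, and by uniqueness (and space-translation covariance of that equation) we obtain $\tilde u(t,x) = u_0(t, x + \xi(T))$.

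Second, I would compare the quantity $A$ to its analogue $\tilde A(t) := \int_{\mathbb{T}^d} \nabla \tilde u(t,x) \tilde m(t,x) dx$. By the substitution $y = x+\xi(t)$, one immediately has $\tilde A(t) = A(t)$ for every $t$. Now Lemma \ref{conservation} applies to $(\tilde u, \tilde m)$, whose Hamiltonian is purely $\tilde H(\nabla \tilde u) = \tfrac12|\nabla \tilde u|^2$ and independent of $x$; hence $\tilde A$, and therefore $A$, is constant in time. This constancy is the key consequence that makes $\xi(T)$ explicit: $\xi(T) = \lambda A T$.

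Finally, I would evaluate $\tilde A$ at $t=0$. Since $\xi(0) = 0$, one has $\tilde m(0,\cdot) = m_0$, and from the formula $\tilde u(t,x) = u_0(t, x+\xi(T)) = u_0(t, x+\lambda A T)$ we obtain
\[
A = \tilde A(0) = \int_{\mathbb{T}^d} \nabla u_0(0, x + \lambda A T)\, m_0(x)\, dx,
\]
which is exactly (\ref{eqAprop}). The main point requiring care is the first step: verifying that the time derivative picked up by the moving frame exactly cancels the nonlocal drift, so that $(\tilde u, \tilde m)$ falls within the scope of Lemma \ref{conservation}. Everything else is a change of variables and an application of the lemma.
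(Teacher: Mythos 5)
Your proposal is correct and rests on exactly the same two ingredients as the paper's proof: the conservation of $A=\int_{\mathbb{T}^d}\nabla u\, m$ via Lemma \ref{conservation} and the translation change of variables relating $u$ to $u_0$ (so that $u(t,x)=u_0(t,x+\lambda A(T-t))$), followed by evaluation at $t=0$. The only difference is the order of operations: you perform the moving-frame reduction of Theorem \ref{uniquecontrol} first and then apply Lemma \ref{conservation} verbatim to the decoupled system, whereas the paper applies the lemma's proof directly to (\ref{example1}) and only then changes variables in the HJB equation; this is a cosmetic reorganization, not a different argument.
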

\begin{proof}
Let us remark that the proof of lemma \ref{conservation} applies also for (\ref{example1}), hence $(u,m)$ satisfies :
\[
\frac{d}{dt} \int_{\mathbb{T}^d} \nabla u m = 0.
\]
Thus $A$ is well defined. Now let us remark that a simple change of variable yields:
\[
u(t,x) = u_0(t, x + \lambda A ( T - t)).
\]
Hence, evaluating $\int_{\mathbb{T}^d} \nabla u m$ at $t= 0$ we obtain that
\[
A = \int_{\mathbb{T}^d} \nabla u_0( x + \lambda A T,0) m_0(x) dx.
\]
\end{proof}
Each solution $A$ of this equation produces a solution for the system (\ref{example1}). Hence if $\phi$ and $m_0$ are such that the previous equation admits a unique solution then we have proved uniqueness. Such a situation is possible if we obtain a precise bound on $\nabla u_0$ for instance. We give now an example of such a situation :

\begin{Prop}\label{example2}
If $D^2\phi(x) \leq c \text{  }Id$ holds for some $c >0$ in the sense of distributions, then uniqueness in (\ref{example1}) holds if 
\[
\lambda < \frac{1 + cT}{cT}.
\]
\end{Prop}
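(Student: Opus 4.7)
The strategy is to reduce uniqueness for (\ref{example1}) to uniqueness of the constant vector $A \in \mathbb{R}^d$ appearing in Lemma \ref{eqA}, and then to treat the reduced problem by a quantitative semi-concavity bound on $u_0$.

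First I would argue that prescribing $A$ pins down $(u,m)$. Once $A \in \mathbb{R}^d$ is treated as given, the equation for $u$ becomes a standard viscous HJB with the extra transport term $-\lambda A \cdot \nabla u$; the change of unknown $u(t,x) = u_0(t, x + \lambda A(T-t))$ (already used in the proof of Lemma \ref{eqA}) reduces it to the unperturbed HJB with terminal data $\phi$, so $u$ is uniquely determined by $A$. Then $m$ is obtained from a linear Fokker--Planck equation with the fixed smooth drift $\nabla u - \lambda A$, which admits a unique solution starting from $m_0$. Hence solutions of (\ref{example1}) are in one-to-one correspondence with vectors $A \in \mathbb{R}^d$ satisfying
\[
A = F(A) := \int_{\mathbb{T}^d} \nabla u_0(x + \lambda A T, 0) m_0(x)\, dx,
\]
and it remains to show that $F$ has at most one fixed point.

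The central ingredient is the semi-concavity estimate
\[
D^2 u_0(0, \cdot) \leq \frac{c}{1 + cT}\, Id
\]
in the sense of distributions, which propagates the terminal bound $D^2\phi \leq c\, Id$ backwards through the HJB. In the inviscid case $\nu = 0$ this follows from the Hopf--Lax formula $u_0(t,x) = \inf_y \{ \phi(y) + \tfrac{|x-y|^2}{2(T-t)}\}$ by a direct envelope computation; for $\nu > 0$ the same constant is recovered either by vanishing viscosity, by a Hopf--Cole argument, or by applying a parabolic maximum principle to the Riccati-type equation satisfied by $D^2 u_0$ (the convexity of the quadratic Hamiltonian produces the correct sign in the second-order term).

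To conclude I would compare two fixed points $A_1, A_2$ of $F$ via the fundamental theorem of calculus. The Jacobian $DF(B) = \lambda T \int_{\mathbb{T}^d} D^2 u_0(x + \lambda B T, 0) m_0(x)\, dx$ is a symmetric matrix bounded above by $\tfrac{\lambda T c}{1 + cT}\, Id$ for every $B$, by the previous estimate together with $\int m_0 = 1$. Writing
\[
A_1 - A_2 = \left(\int_0^1 DF(s A_1 + (1-s) A_2)\, ds\right)(A_1 - A_2),
\]
taking the scalar product with $A_1 - A_2$, and using that an upper matrix bound on a symmetric matrix controls its quadratic form, one obtains $|A_1 - A_2|^2 \leq \tfrac{\lambda T c}{1+cT} |A_1 - A_2|^2$. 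The hypothesis $\lambda < (1+cT)/(cT)$ is precisely what makes the multiplicative factor strictly less than $1$, forcing $A_1 = A_2$. The main obstacle is obtaining the sharp constant $c/(1+cT)$ in the semi-concavity estimate for $\nu > 0$, since the threshold on $\lambda$ depends exactly on this number; however the estimate is standard in the HJB literature, and the remainder of the argument is then routine bookkeeping.
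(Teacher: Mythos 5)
Your proposal is correct and follows essentially the same route as the paper: reduce uniqueness to the fixed-point equation for $A$ from Lemma \ref{eqA}, invoke the classical semi-concavity estimate $D^2u_0(0)\leq \frac{c}{1+cT}\,Id$, and use the resulting Jacobian bound $\lambda T c/(1+cT)<1$ to get uniqueness of $A$ (the paper phrases this as strict monotonicity of $A\mapsto A-\int\nabla u_0(x+\lambda AT,0)m_0\,dx$, you as a contraction estimate between two fixed points, which is the same computation). You additionally spell out the step, left implicit in the paper, that a given $A$ determines $(u,m)$, which is a welcome clarification.
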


\begin{proof}
Under those assumptions, in view of classical semi-concavity estimates for HJB equations, the Hessian matrix $D^2 u_0$ of $u_0$ also satisfies in the sense of distributions for any $0 \leq t \leq T $:
\[
D^2u_0(t) \leq \frac{c}{1 + c(T-t)}Id.
\]
Hence the equation (\ref{eqAprop}) satisfied by $A$ admits a unique fixed point if  :
\[
\lambda < \frac{1 + cT}{cT};
\]
because then $A - \int_{\mathbb{T}^d} \nabla u_0( x + \lambda A T,0) m_0(x) dx$ is a strictly increasing function of $A$.
\end{proof}

\begin{Rem}
It is clear from (\ref{eqAprop}) that without an assumption on $\phi$, uniqueness may not hold. Moreover such a proof can be adapted to a case in which we add a dependence $f(m)$ in the HJB equation.
\end{Rem}
\begin{Rem}
Let us note that the condition in this proposition can be interpreted as the bigger $c$ is (i.e. the more the system depends on the space variable $x$), the more the effect of the average strategy (the parameter $\lambda$) needs to be small. Also, if $\lambda < 1$, then the condition is satisfied independently of $c$, thus the condition $\lambda < 1$ yields uniqueness even in the case $c = + \infty$, that is for general $\phi$.
\end{Rem}

\section{Common noise in discrete state space MFG}\label{sec : mfgn}
The addition of a common noise in the MFG setting remains one of the most important questions in the MFG theory. Even though \citep{cardaliaguet2015master, lions2007cours} provide a detailed study of the so-called master equation, several problems remain open. We can cite for example wether or not there exist weaker regularity assumptions than the one needed in \citep{cardaliaguet2015master}, or the possibility to give an extended sense of solutions which yields existence results in the non-monotone case. We present here a discrete state space master equation which models the presence of a common noise in the MFG. We first recall some facts on the master equation without common noise before explaining what we mean by common noise in the discrete state space. We then prove the well-posedness of such an equation as well as some limit equations which can be obtained from our model.

\subsection{The master equation for a discrete state space}
In a discrete state space, without a common noise, the infinite dimensional master equation reduces to a first order non conservative hyperbolic system of the following form (see \citep{lions2007cours} for instance) : 
\begin{equation}\label{MFGn}
\begin{cases}
\partial_t U(t,x) + (F(x, U) \cdot \nabla)U  = G(x,U) \text{ in } (0,t_f)\times \mathbb{R^d};\\
U(0,x) = U_0(x) \text{ in } \mathbb{R^d}.
\end{cases}
\end{equation}
Let us recall that in such a system, $U$ is a map from $(0,t_f)\times \mathbb{R}^d$ to $\mathbb{R}^d$, $F$ and $G$ are applications from $\mathbb{R}^{2d}$ to $\mathbb{R}^d$. The time $t_f$ is the final time (fixed in $(0,\infty)$) and the initial condition is $U_0$. We also recall that the system has to be understood as follows : for all $1 \leq i \leq d$, the $ith$ component $U_i$ of $U$ satisfies :
\[
\begin{cases}
\partial_t U_i(t,x) + F(x, U) \cdot \nabla U_i  = G_i(x,U) \text{ in } (0,t_f)\times \mathbb{R^d}\\
U_i(0,x) = (U_0)_i(x) \text{ in } \mathbb{R^d}.
\end{cases}
\]
The MFG interpretation of (\ref{MFGn}) is that $x$ represents the number of players in the $d$ possible states, meaning that $x_1$ is the number of players in the first state, $x_2$ the number of players in the second and so on. The interpretation of the map $U$ is that the $ith$ component $U_i(t,x)$ of $U(t,x)$ is the value of the optimal control problem for a generic player in the state $i$, with the repartition of other players being $x$ at the time $t$. Let us note that contrary to the classical MFG system (\ref{MFG}) or to the master equation presented in \citep{cardaliaguet2015master}, the equation is written here forward in time, instead of backward (i.e. we reversed time with $t \to T- t$).\\
\\
We also recall that there exists a system of characteristics for (\ref{MFGn}). It is defined for any $x_0 \in \mathbb{R}^d$ by 
\begin{equation}\label{chara}
\begin{cases}
\frac{d}{ds}V(s) = G(Y(s),V(s));\\
\frac{d}{ds} Y(s) = F(Y(s), V(s));\\
Y(0) = x_0 ; V(0) = U_0(x_0).
\end{cases}
\end{equation}
Using the previous characteristics, we can define a solution $U$ of (\ref{MFGn}) by :
\[
U(t, Y(t)) = V(t);
\]
One can show (\citep{lions2007cours}) that it is possible to define a solution of (\ref{MFGn}) using the characteristics under a monotonicity assumptions. The system (\ref{chara}) is the analogous of the usual MFG system (\ref{MFG}) in a discrete state space. Thus, many results on (\ref{MFGn}) can be proven using this system of characteristics (\ref{chara}). However, addressing the regularity and the well-posedness of (\ref{MFGn}) without using (\ref{chara}) leads to techniques and proofs which are adaptable to the addition of a common noise, whereas the use of the system (\ref{chara}) collapses in the presence of a common noise.\\
\\
As the notion of monotonicity is crucial in all the results we present in this section, we define here what we mean by monotone. A map $V$ from $\mathbb{R}^d$ into itself is said to be monotone if 
\[
\forall x, y \in \mathbb{R}^d, < V(x) - V(y), x - y > \geq 0.
\]
The map $V$ is said to be $\alpha$ monotone if
\[
\forall x, y \in \mathbb{R}^d, < V(x) - V(y), x - y > \geq \alpha |x - y|^2.
\]
Here, $< x, y> = x \cdot y$ denotes the usual scalar product between $x$ and $y$. 

\subsection{The master equation in the presence of noise}
We propose here a master equation in finite dimension with additional terms which model jumps which can occur in the population. Those jumps can be random and thus can be assimilated to noise in the MFG. The jumps are described by an application $T$. We give two examples of such maps $T$ to fix ideas, and explain how the jumps have to interpreted, depending on the nature of the noises.
\begin{itemize}
\item If the map $T$ is defined by :
\[
T(x) = (x_2,x_1, x_3,...,x_d);
\]
then the map $T$ models jumps which only concerns players in the first or second state. The players in the first state are sent to the second one and vice versa. If all the players face this jump at the same time, then all the players in the first state and the ones in the second one are exchanged. If only a proportion $0 \leq \theta \leq 1$ of the players in each states jump simultaneously, then the state of the population after the jump is given by $\theta T(x) + (1- \theta)x$, if $x$ is the state of the population before.

\item If the map $T$ is defined by :
\[
T(x) = (\frac{x_1}{2} + \delta , x_2 + \frac{x_1}{2},x_3, .., x_d);
\]
where $\delta > 0$ is fixed, then if a player is in the first state, it has a probability one half to jump to the second state. This jump is naturally associated with a structure of noise, independently of the randomness that may exist concerning the players this jump affects or the time at which it occurs. If a proportion $0 \leq \theta \leq 1$ of the players in all states are simultaneously affected by this jump, then a fraction $\frac{\theta}{2}$ of the players in the first state jump in the second one and an amount of players corresponding to $\theta \delta$ is added to the game in the first state.
\end{itemize}
The noise structures modeled by this kind of terms are numerous. Indeed with this second example, we show that there can be randomness in the fact that a player will jump or not, just as there can be randomness on the state in which it is going to jump. There can also be randomness on the times at which those jumps occur. The correlations between those times can be quite general as we shall now see through some examples of master equations.

We now make precise how jumps are taken into account in the master equation, depending on the correlations of the jumps. We give three examples of master equation which models jumps occurring at deterministic times, or when the times of the jumps are random but common to every players and finally when they are random and independent identically distributed (iid) for the players. In those examples, $T$ is a differentiable map from $\mathbb{R}^d$ to itself.\\
\\
If there is only one jump characterized by $T$ which occurs at the deterministic time $t_1 \in (0,t_f)$, then the master equation is :
\begin{equation}\label{med}
\begin{cases}
\partial_t U(t,x) + (F(x, U) \cdot \nabla)U = G(x,U) \text{ in } (t_1,t_f)\times \mathbb{R^d};\\
U(t_1^+,x) = (DT(Tx))^*U(t_1^-, T(x)) \text{ in } \mathbb{R}^d;\\
\partial_t U(t,x) + (F(x, U) \cdot \nabla)U = G(x,U) \text{ in } (0,t_1)\times \mathbb{R^d};\\
U(0,x) = U_0(x) \text{ in } \mathbb{R^d};
\end{cases}
\end{equation}
where $t_f> 0$ is the time horizon and $(DT)^*$ stands for the adjoint of the differential of the map $T$.\\
\\
If at random times given by an exponential law of parameter $\lambda$, all the players jump according to $T$, then the master equation is :
\begin{equation}\label{MFGnl}
\begin{cases}
\partial_t U(t,x) + (F(x, U) \cdot \nabla)U + \lambda (U - (DT(T(x)))^*U(t, T(x))) = G(x,U) \text{ in } (0,t_f)\times \mathbb{R^d};\\
U(0,x) = U_0(x) \text{ in } \mathbb{R^d};
\end{cases}
\end{equation}
where $\lambda$ is the parameter of the Poisson process which describes the jumps. In (\ref{MFGnl}), the term 
\[
\lambda (U - (DT(Tx))^*U(t, Tx))
\]
is understood as the players anticipating the noise. The proper derivation of such a term is technical and we do not enter in such calculations here. We refer to \citep{conze1991system, conze1993borrowing} for a proper derivation of this term in a different setting, when $T$ is a linear map.\\
\\
If all the players jump according to $T$ at times given by iid Poisson processes of paramter $\lambda$, then the master equation is given by :
\begin{equation}\label{MFGnli}
\begin{cases}
\begin{aligned}
G(x, U) = &\partial_t U(t,x) + (F(x, U) \cdot \nabla)U \\
&+ \lambda (Id - (DT(x))^*)U(t,x) +\lambda((Id - T(x))x \cdot \nabla) U \\
& \text{ in } (0,t_f)\times \mathbb{R^d};
\end{aligned}\\
U(0,x) = U_0(x) \text{ in } \mathbb{R^d}.
\end{cases}
\end{equation}
In this equation, the drift term $\lambda((Id - T(x))x \cdot \nabla) U $ is interpreted as the effect of the flow of players which are jumping. Indeed, because the jumps are independent and there is an infinity of players, there is always a constant proportion of players which are jumping. This phenomenon induces a dynamic in the population which is taken into account in this drift term.

\begin{Rem}
Let us remark that a master equation can be written in a similar way if some of the above three phenomena happen together in the MFG.
\end{Rem}
\subsection{Propagation of monotonicity and regularity}
In this section, we shall show that under some assumptions on the form of $T$ , the solutions of (\ref{med}), (\ref{MFGnl}) and (\ref{MFGnli}) have some monotonicity properties. We then show how we can establish regularity properties from the propagation of the monotonicity.

\begin{Prop}\label{monotone1}
Assume that $T$ is affine. Let $U$ be a classical solution of (\ref{med}), (\ref{MFGnl}) or (\ref{MFGnli}). If $(G,F)$ (seen as a map from $\mathbb{R}^{2d}$ into itself) is monotone  and $U_0$ is monotone, then $U$ is monotone for all time.
\end{Prop}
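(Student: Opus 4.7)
The plan is to track the test quantity $w(t,x,y) := \langle U(t,x) - U(t,y), x-y\rangle$ and show that $\inf_{(x,y)} w(t,\cdot,\cdot)$, which is nonnegative at $t=0$ by the monotonicity of $U_0$, remains nonnegative along the evolution. Since $T$ is affine, write $T(x) = Ax + b$, so that $DT \equiv A$ is a constant matrix; this is exactly what will make the jump and the anticipation terms act with constant coefficients, and the test-function argument close.

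For the hyperbolic backbone $\partial_t U + F(x,U)\cdot \nabla U = G(x,U)$ common to all three equations, I would compute $\partial_t w$ and evaluate it at an interior minimum $(x^\ast, y^\ast)$ of $w(t,\cdot,\cdot)$. The first-order conditions $\nabla_x w = \nabla_y w = 0$ read
\[
DU(t,x^\ast)^T (x^\ast-y^\ast) = DU(t,y^\ast)^T(x^\ast-y^\ast) = -(U(t,x^\ast)-U(t,y^\ast)),
\]
and inserting them into the transport contribution (via $F\cdot \nabla U = (DU) F$ and the adjoint identity) gives
\[
\partial_t w(t,x^\ast,y^\ast) = \langle G(x^\ast,U(t,x^\ast)) - G(y^\ast,U(t,y^\ast)), x^\ast-y^\ast\rangle + \langle F(x^\ast,U(t,x^\ast)) - F(y^\ast,U(t,y^\ast)), U(t,x^\ast)-U(t,y^\ast)\rangle,
\]
which is precisely the monotonicity inequality for $(G,F)$ evaluated at $(x^\ast, U(t,x^\ast))$ and $(y^\ast, U(t,y^\ast))$, hence $\geq 0$.

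For (\ref{med}) this settles the two smooth regimes $(0,t_1)$ and $(t_1,t_f)$, so only the jump remains. Using $T$ affine we have $U(t_1^+,x) = A^\ast U(t_1^-, Ax+b)$, and a direct computation
\[
\langle U(t_1^+,x) - U(t_1^+,y), x-y\rangle = \langle U(t_1^-, Ax+b) - U(t_1^-, Ay+b), (Ax+b)-(Ay+b)\rangle \geq 0
\]
shows monotonicity passes through the jump. For (\ref{MFGnl}), the anticipation $\lambda(U - A^\ast U(t, Ax+b))$ contributes $-\lambda w(t,x^\ast,y^\ast) + \lambda w(t, Ax^\ast+b, Ay^\ast+b)$ to $\partial_t w$, which is $\geq 0$ because $(x^\ast,y^\ast)$ is a global minimizer of $w(t,\cdot,\cdot)$. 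For (\ref{MFGnli}), the extra drift $\lambda((I-A)x-b)\cdot \nabla U$ and reaction $\lambda(I - A^\ast)U$ yield, after the same first-order substitution, exactly
\[
\lambda\langle (I-A)(x^\ast-y^\ast), U(t,x^\ast)-U(t,y^\ast)\rangle - \lambda \langle U(t,x^\ast)-U(t,y^\ast), (I-A)(x^\ast-y^\ast)\rangle = 0;
\]
this cancellation relies crucially on $DT$ being the same constant $A$ at $x^\ast$ and $y^\ast$, which is where the affineness of $T$ is used.

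The main obstacle is that $\mathbb{R}^{2d}$ is not compact, so $\inf w(t,\cdot,\cdot)$ may not be attained and the first-order argument cannot be applied verbatim. The standard remedy is to regularize by subtracting a small penalty $\varepsilon(|x|^2+|y|^2)$ to force the minimum to sit at a finite point, rerun the computation on the perturbed functional with an $O(\varepsilon)$ error, and pass to the limit $\varepsilon \to 0$. A less systematic alternative is to propagate monotonicity along the characteristic system (\ref{chara}) in the smooth regimes, where $\frac{d}{dt}\langle V_1-V_2, Y_1-Y_2\rangle \geq 0$ is immediate from the monotonicity of $(G,F)$, and augment these characteristics with Poisson jumps of rate $\lambda$ for (\ref{MFGnl}) and (\ref{MFGnli}), averaging over jump paths.
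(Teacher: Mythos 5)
Your argument is essentially the paper's own: you double variables with $w(t,x,y)=\langle U(t,x)-U(t,y),x-y\rangle$, use the monotonicity of $(G,F)$ for the transport part, the identity $w(t_1^+,x,y)=w(t_1^-,Tx,Ty)$ for the jump in (\ref{med}), global minimality for the non-local term in (\ref{MFGnl}), and the exact cancellation of the drift and reaction terms for (\ref{MFGnli}); the paper simply packages your minimum-point reasoning into a separate maximum-principle lemma (its Lemma \ref{max}), whose proof is the same touching-point argument you sketch. Only cosmetic caveat: to force attainment of the infimum you should \emph{add} a coercive penalty $\varepsilon(|x|^2+|y|^2)$ (plus the usual $\delta(t+1)$ shift), not subtract it, exactly as in the paper's appendix.
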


\begin{proof}
The proof of this result follows the arguments in \citep{lions2007cours}. We write the affine map as $T = S + e$ where $e\in \mathbb{R}^d$ and $S$ is a linear map.\\
\\
\textit{The case of (\ref{med}) :}\\
Let $U$ be a classical solution of $(\ref{med})$, we denote by $V$ another classical solution of (\ref{med}) and we define $W$ by
\[
W(t,x,y) = < U(t,x) - V(t,y), x - y >.
\]
Let us note that $W$ satisfies the following equation in $((0,t_1) \cup (t_1, t_f))\times \mathbb{R^d}$:
\[
\begin{aligned}
\partial_t W + F(x,U) \cdot \nabla_x W + F(y,V)\cdot \nabla_y W =& <G(x,U) - G(y,V), x - y> \\
& + < F(x,U) - F(y, V), U(t,x) - V(t,y)>;
\end{aligned}
\]
Let us now take $V= U$. Because $U_0$ is monotone, we deduce that $W(t,x,y) \geq 0$ for all $x,y \in \mathbb{R}^d$. We also deduce from the monotonicity of $(G,F)$ that the right hand side of the previous equation is positive everywhere. Thus we deduce form the maximum principle (lemma \ref{max} in the appendix) that $U$ is monotone ($W$ is positive) in $(0,t_1)$. Now let us remark that the relation satisfied by $U$ at $t_1$ yields for $W$ :
\[
W(t_1^+, x,y) = W(t_1^-, Tx ,Ty).
\]
Thus $W$ is positive at $t_1^+$ and thus $W$ is positive for all time $t \in (0,t_f)$, by repeating the same argument.\\
\\
\textit{The case of (\ref{MFGnl}):}\\
Let $U$ be a classical solution of $(\ref{MFGnl})$, we denote by $V$ another classical solution of (\ref{MFGnl}) and we define again $W$ by
\[
W(t,x,y) = < U(t,x) - V(t,y), x - y >.
\]
Let us note that $W$ satisfies the following equation :
\[
\begin{aligned}
\partial_t W +  & F(x,U) \cdot \nabla_x W + F(y,V)\cdot \nabla_y W + \lambda W - \lambda < S^*U(t,Tx) - S^*V(t,Ty) , x - y> \\
& = <G(x,U) - G(y,V), x - y> + < F(x,U) - F(y, V), U(t,x) - V(t,y)>;
\end{aligned}
\]
which can be rewritten in :
\[
\begin{aligned}
\partial_t W +  & F(x,U) \cdot \nabla_x W + F(y,V)\cdot \nabla_y W + \lambda W - \lambda W(t,Tx,Ty) \\
& = <G(x,U) - G(y,V), x - y> + < F(x,U) - F(y, V), U(t,x) - V(t,y)>.
\end{aligned}
\]
By the maximum principle (lemma \ref{max} in the appendix), we deduce that $U$ is monotone for all time. This is a straightforward consequence of the monotonicity of $(G,F)$ and $U_0$, taking $V = U$ in $W$.\\
\\
\textit{The case of (\ref{MFGnli}) :}\\
Let $U$ be a classical solution of $(\ref{MFGnli})$, we denote by $V$ another classical solution of (\ref{MFGnli}) and we define once again $W$ by
\[
W(t,x,y) = < U(t,x) - V(t,y), x - y >.
\]
The appliccation $W$ satisfies the following equation :
\[
\begin{aligned}
\partial_t W +   F(x,U) \cdot &\nabla_x W + F(y,V)\cdot \nabla_y W + \lambda (S- Id)x \cdot \nabla_x W\\
 + \lambda (S- Id&)y \cdot \nabla_y W + \lambda <U - V, (Id - S)(x-y)>   \\
 = <G(x,U)& - G(y,V), x - y> + < F(x,U) - F(y, V), U(t,x) - V(t,y)> \\
+ \lambda <U - &V, (Id - S)(x-y)>;
\end{aligned}
\]
Thus $W$ satisfies :
\[
\begin{aligned}
\partial_t W +   F(x,U) \cdot &\nabla_x W + F(y,V)\cdot \nabla_y W + \lambda (S- Id)x \cdot \nabla_x W\\
 + \lambda (S- Id&)y \cdot \nabla_y W\\
 = <G(x,U)& - G(y,V), x - y> + < F(x,U) - F(y, V), U(t,x) - V(t,y)>;
\end{aligned}
\]
Once again, by the maximum principle (lemma \ref{max} in the appendix), we deduce that $U$ is monotone for all time by taking $V = U$ and using the monotonicity of $U_0$ and $(G,F)$.
\end{proof}
\begin{Rem}
Let us note that in particular, under the assumptions of proposition \ref{monotone1}, there exists at most one classical solution of (\ref{med}), (\ref{MFGnl}) and (\ref{MFGnli}). Indeed uniqueness follows readily from the positivity of $W$ (see \citep{lions2007cours}).
\end{Rem}

As we can see in the proof of the previous statement, the propagation of the monotonicity of $U$ is really a "strong" property of (\ref{med}), (\ref{MFGnl}) and (\ref{MFGnli}), in the sense that it does not depend on $\lambda$, $t_1$ or on $T$. The different terms in each system only reinforce the propagation of monotonicity. 

We now prove that under stronger assumptions on $U_0$ and $(G,F)$, we can deduce from the propagation of monotonicity, some regularity for the system (\ref{MFGnl}). Similar results can be obtain for (\ref{med}) and (\ref{MFGnli}) by similar arguments.

\begin{Theorem}\label{amonotone}
Assume that the map $T$ is affine and that $U_0$, $F$ and $G$ are Lipschitz continuous. Let $U$ be a solution of (\ref{MFGnl}). If both $U_0$ and $(G,F)$  are monotone and either $U_0$ and $G$ are $\alpha$ monotone or $F$ is $\alpha$ monotone (for some $\alpha > 0$), then $U$ is Lipschitz in space, uniformly on $(0, t_f)$, for all $t_f > 0$.
\end{Theorem}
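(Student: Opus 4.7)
The plan is to upgrade the doubling-of-variables argument from the proof of Proposition \ref{monotone1}, converting the qualitative statement $W\geq 0$ into a quantitative bound that forces Lipschitz regularity. As there, I set $W(t,x,y)=\langle U(t,x)-U(t,y),x-y\rangle$; Proposition \ref{monotone1} applied with $V=U$ already gives $W\geq 0$ on $(0,t_f)$. My strategy is to show in addition that
\[
W(t,x,y)\geq c\,|U(t,x)-U(t,y)|^2
\]
for some $c>0$ independent of $t\in(0,t_f)$. Combined with the Cauchy--Schwarz inequality $W\leq|U(t,x)-U(t,y)|\,|x-y|$, this bound immediately yields $|U(t,x)-U(t,y)|\leq c^{-1}|x-y|$, which is exactly the desired uniform Lipschitz estimate.

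I would then define $Z(t,x,y):=W(t,x,y)-c\,|U(t,x)-U(t,y)|^2$ and try to show $Z\geq 0$ via the maximum principle of Lemma \ref{max}. The PDE for $Z$ is obtained by combining the PDE for $W$ derived in the proof of Proposition \ref{monotone1} with the equation for $|U(t,x)-U(t,y)|^2$, the latter obtained by applying \eqref{MFGnl} at $x$ and at $y$ and taking the scalar product with $U(t,x)-U(t,y)$. Its right-hand side features the monotonicity expression $\langle G(x,U(t,x))-G(y,U(t,y)),x-y\rangle+\langle F(x,U(t,x))-F(y,U(t,y)),U(t,x)-U(t,y)\rangle$ together with cross terms coming from the $-c|U(t,x)-U(t,y)|^2$ correction. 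Under the hypothesis that $F$ is $\alpha$-monotone, the monotonicity expression produces an excess $\alpha|U(t,x)-U(t,y)|^2$ on the right-hand side, which together with Young's inequality absorbs the cross terms, provided $c$ is chosen small enough relative to the Lipschitz constants of $F$ and $G$.

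The initial compatibility $Z(0,\cdot,\cdot)\geq 0$ uses the Lipschitz continuity of $U_0$: one always has $|U_0(x)-U_0(y)|^2\leq\mathrm{Lip}(U_0)^2|x-y|^2$. In the alternative scenario where $U_0$ and $G$ are $\alpha$-monotone, one has $W(0,x,y)\geq\alpha|x-y|^2$, hence $Z(0)\geq 0$ whenever $c\leq\alpha/\mathrm{Lip}(U_0)^2$; the $\alpha|x-y|^2$ excess produced by the $\alpha$-monotonicity of $G$ then plays the role of the $\alpha|U(t,x)-U(t,y)|^2$ term from the previous case, after introducing the refined corrector $\tilde Z:=W-c|x-y|^2-c'|U(t,x)-U(t,y)|^2$ and balancing $c,c'$. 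In the $F$-monotone scenario, since $U_0$ is not assumed $\alpha$-monotone, the initial step is less immediate and one rather relies on the strict forcing produced along the evolution together with the Lipschitz bound on $U_0$ to start the comparison.

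The main obstacle I anticipate is the nonlocal jump term in the $Z$-equation. Writing $T=S+e$ with $S$ linear, one has $Tx-Ty=S(x-y)$, so that the quadratic corrector $c|U(t,x)-U(t,y)|^2$ and, in the alternative case, $c|x-y|^2$, generate defects
\[
\lambda c\bigl(|U(t,x)-U(t,y)|^2-|U(t,Tx)-U(t,Ty)|^2\bigr),\quad \lambda c\bigl(|x-y|^2-|S(x-y)|^2\bigr)
\]
of indefinite sign, which do not fit into the framework of Lemma \ref{max} as the corrector does not commute with $(x,y)\mapsto(Tx,Ty)$. Since $T$ is affine with $S$ a fixed linear map, these defects are bounded by $\lambda(\|S\|^2+1)$ times the quadratic terms already present in $Z$, and can be absorbed by shrinking $c$ further, with dependence on $\lambda$, $\|S\|$, $t_f$ and on the Lipschitz constants of $U_0, F, G$. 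Lemma \ref{max} then produces $Z\geq 0$, from which the uniform Lipschitz estimate follows.
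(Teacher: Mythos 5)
Your overall strategy --- propagating a quantitative strengthening of the monotonicity inequality for the doubled quantity $W(t,x,y)=\langle U(t,x)-U(t,y),x-y\rangle$ via the maximum principle of Lemma \ref{max} --- is the finite-difference analogue of what the paper actually does with $W(t,x,\xi)=\langle U(t,x),\xi\rangle$ and $Z_\beta=\langle\xi,\nabla_xW\rangle-\beta|\nabla_xW|^2$, and your worry about the jump term is actually the harmless part: $\lambda(Z-Z(t,Tx,Ty))$ fits Lemma \ref{max} with $f(x,y)=(Tx,Ty)$, and the residual defect is one-sidedly bounded, $|P|^2-2\langle S^*P_T,P\rangle+|P_T|^2\geq |P|^2-|SP|^2$ with $P=U(t,x)-U(t,y)$, $P_T=U(t,Tx)-U(t,Ty)$, exactly as in the paper's estimate. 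The genuine gap is elsewhere: with \emph{constant} correctors, the Young-inequality bookkeeping does not close under either of the theorem's two alternative hypotheses; it only closes when $U_0$, $G$ and $F$ are all $\alpha$-monotone, which is the easier case treated in the paper's main text.

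Concretely: (i) if only $F$ is $\alpha$-monotone and you take $Z=W-c|P|^2$, the source term of the equation for $|P|^2$ contributes $-2c\langle G(x,U(t,x))-G(y,U(t,y)),P\rangle$, whose $x$-dependence produces a term of size $c\,\mathrm{Lip}(G)\,|x-y|\,|P|$; Young's inequality converts it into an $|x-y|^2$ deficit for which there is no coercive counterpart (the monotonicity hypothesis only yields $\alpha|P|^2$), so the pointwise differential inequality required by Lemma \ref{max} fails in the region $|x-y|\gg|P|$. Worse, the initial inequality $Z(0)\geq 0$ is false in general, since $U_0$ is only monotone: for $U_0(x)=Jx$ with $J$ antisymmetric one has $W(0,x,y)=0$ while $|U_0(x)-U_0(y)|^2>0$; ``relying on the strict forcing along the evolution'' is not an argument. (ii) If only $U_0$ and $G$ are $\alpha$-monotone and you take $\tilde Z=W-c|x-y|^2-c'|P|^2$, the right-hand side contains negative $|P|^2$ terms (from the $G$-cross term, the jump defect, and the Young leftovers of the $|x-y|\,|P|$ cross terms) with no positive $|P|^2$ term available; since a priori $|P|$ can be arbitrarily large compared with $|x-y|$ (the Lipschitz bound is precisely what is being proved), no choice of small constants $c,c'$ makes the inequality hold pointwise. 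The missing idea --- and it is exactly how the paper's appendix treats these two cases --- is to let the correctors depend on time: a decreasing $\beta(t)$ so that the extra term $-\frac{d}{dt}\beta\,|\nabla_xW|^2$ (in your notation $-\frac{d}{dt}c'(t)\,|P|^2$) supplies the missing $|P|^2$-coercivity, and, in the $F$-monotone case, an additional term $+\gamma(t)|\xi|^2$ (your $+\gamma(t)|x-y|^2$) with $\gamma(0)=\beta(0)\|D_xU_0\|^2$, which makes the initial value nonnegative using only the Lipschitz bound and plain monotonicity of $U_0$, the pair $(\beta,\gamma)$ solving a differential system that keeps $\beta>0$ on $[0,t_f]$. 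Without this mechanism your argument proves the theorem only under the stronger assumption that $U_0$, $G$ and $F$ are simultaneously $\alpha$-monotone.
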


\begin{proof}
We here only prove the result in the case in which both $U_0$ and $(G,F)$ are $\alpha$ monotone. The proof of the complete statement is given in the appendix. This proof also follows the argument in \citep{lions2007cours}. Once again we note the affine map $T = S + e$ with $S$ linear and $e\in \mathbb{R}^d$ constant. We define $W$ and $Z_{\beta}$ by
\[
W(t,x,\xi) = <U(t,x), \xi> ;
\]
\[
Z_{\beta}(t,x,\xi) = <\xi, \nabla_x W(t,x,\xi)> - \beta |\nabla_x W(t,x,\xi)|^2.
\]
We now write the PDE satisfied by $Z_{\beta}$ :

\begin{equation}\label{systZ}
\begin{aligned}
& \partial_t Z_{\beta} + <F(x,\nabla_{\xi} W), \nabla_x Z_{\beta}> + < D_p F (x,\nabla_{\xi} W ) \nabla_{\xi}Z_{\beta} , \nabla_{x} W> - < D_p G(x, \nabla_{\xi}W)\nabla_{\xi}Z_{\beta}, \xi>\\
&+  \lambda(Z_{\beta} - Z_{\beta}(t,Tx, T\xi - e) )\\
& = <\nabla_xG(x, \nabla_x W) \xi, \xi> - < D_pG(x, \nabla_{\xi}W) \nabla_x W, \xi> - < \nabla_x F(x, \nabla_{\xi} W) \nabla_x W, \xi> \\
&+ <D_p F(x, \nabla_{\xi}W) \nabla_x W, \nabla_x W>\\
&- 2 \beta < \nabla_x G(x, \nabla_x W)\xi, \nabla_x W> + 2 \beta < \nabla_x F( \nabla_{\xi} W) \nabla_x W, \nabla_x W>\\
& + \beta \lambda \bigg{(} |\nabla_x W|^2 - 2 < \nabla_x W(t, Tx, T \xi - e), S \nabla_x W> +  | \nabla_x W(t,Tx, T \xi -e)|^2 \bigg{)}. 
\end{aligned}
\end{equation}
For the sake of clarity, we do not detail the calculation which leads to the previous equation, which easily follows from the chain rule. Let us remark that the following holds :
\[
|\nabla_x W|^2 - 2 < \nabla_x W(t, Tx, T \xi - e), S \nabla_x W> +  | \nabla_x W(t,Tx, T \xi - e)|^2 \geq |\nabla_x W|^2 - |S \nabla_x W|^2.
\]
Because $(G,F)$ and $U_0$ are $\alpha$ monotone, we deduce that :
\[
\begin{aligned}
& \partial_t Z_{\beta} + <F(x,\nabla_{\xi} W), \nabla_x Z_{\beta}> + < D_p F (x,\nabla_{\xi} W ) \nabla_{\xi}Z_{\beta} , \nabla_{x} W> - < D_p G(x, \nabla_{\xi}W)\nabla_{\xi}Z_{\beta}, \xi>\\
&+  \lambda(Z_{\beta} - Z_{\beta}(t,Tx, T\xi - e) )\\
&\geq \alpha(|\xi|^2 + |\nabla_x W|^2) + \beta \lambda (|\nabla_x W|^2 - |S \nabla_x W|^2) \\
& - 2 \beta < \nabla_x G(x, \nabla_x W)\xi, \nabla_x W> + 2 \beta < \nabla_x F( \nabla_{\xi} W) \nabla_x W, \nabla_x W>;\\
& \geq \alpha(|\xi|^2 + |\nabla_x W|^2) + \beta \lambda (1 - ||S||^2)|\nabla_x W|^2  \\
& - 2 \beta (||\nabla_x G|| \cdot |\xi | \cdot | \nabla_x W| + || \nabla_x F|| \cdot |\nabla_x W|^2).
\end{aligned}
\]
Hence, if $\beta$ satisfies the inequality (\ref{betacond}), then $Z_{\beta}$ is positive for all time (by lemma \ref{max} in appendix).
\begin{equation}\label{betacond}
\beta \leq \min(\frac{\alpha}{\lambda(||S||^2 - 1)^+ + ||\nabla_x G|| + 2 ||\nabla_x F||}, ||\nabla_x U_0||).
\end{equation}
From the fact that $Z_{\beta}$ is positive for all time, we deduce that :
\[
\forall t, x, \xi : |\nabla_x W(t,x,\xi)| \leq \beta^{-1}|\xi|.
\]
Hence $U$ is $\beta^{-1}$ Lipschitz, uniformly in time.

\end{proof}
\begin{Rem}
Let us notice that the constant $\beta$ involved in the proof only depends on $\alpha$, $t_f$, $\lambda(||DT||^2 - 1)^+$ and on the Lipschitz constants of $U_0$, $F$ and $G$. Thus if the application $S$ is non-expansive, then the constant $\beta$ can be chosen independently of $\lambda$ and $T$, thus we can expect that the sequence of solutions $(U_{\lambda})_{\lambda > 0}$ has a limit as $\lambda$ goes to infinity, extracting a subsequence if necessary.
\end{Rem}
This result emphasis the fact that the monotonicity of the system "implies" the regularity of the solution. Indeed the Lipschitz regularity is enough to prove higher order regularity see \citep{lions2007cours} for instance. Finally we end this discussion on the well-posedness of this system of conservation laws with this straightforward application :

\begin{Theorem}
Let $\mathcal{X}$ be a compact set of affine functions and $µ$ a measure on $\mathcal{X}$. Then, under the assumptions of theorem \ref{amonotone}, there exists a unique smooth monotone solution $U$ of :
\begin{equation*}
\begin{cases}
\partial_t U(t,x) + (F(x, U) \cdot \nabla)U + \lambda \int_{\mathcal{X}}  (U - (DT)^*U(t, Tx))dµ(T) = G(x,U) \text{ in } (0,t_f)\times \mathbb{R^d};\\
U(0,x) = U_0(x) \text{ in } \mathbb{R^d}.
\end{cases}
\end{equation*}
\end{Theorem}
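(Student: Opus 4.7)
The plan is to reproduce the arguments of Proposition \ref{monotone1} and Theorem \ref{amonotone} pointwise in $T$ and then integrate against $\mu$; this works because the jump contributions are linear in the measure, and the compactness of $\mathcal{X}$ supplies the necessary uniform bounds. For $T \in \mathcal{X}$, write $T = S_T + e_T$ with $S_T$ linear; since $T \mapsto \|S_T\|$ is continuous on the compact set $\mathcal{X}$, the constant $M := \int_{\mathcal{X}} (\|S_T\|^2 - 1)^+ d\mu(T)$ is finite.

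For uniqueness and propagation of monotonicity, I would take two classical solutions $U$ and $V$, set $W(t,x,y) = \langle U(t,x) - V(t,y), x - y \rangle$, and reproduce the computation carried out for (\ref{MFGnl}) in Proposition \ref{monotone1} pointwise in $T$ before integrating. The equation for $W$ has the same structure as in the single-jump case, with jump term $\lambda \int_{\mathcal{X}} (W(t,x,y) - W(t, Tx, Ty)) d\mu(T)$ and a right hand side that is nonnegative by the monotonicity of $(G,F)$ (using $S_T(x-y) = Tx - Ty$ exactly as in the proof of Proposition \ref{monotone1}). The maximum principle (Lemma \ref{max}) extends to this integrated jump, since at any point where $W$ attains a negative infimum the integrand $W(t,x,y) - W(t,Tx,Ty)$ is nonpositive for every $T$. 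Thus $W \geq 0$ for all time, giving propagation of monotonicity (taking $V = U$) and uniqueness (taking $V$ to be an arbitrary second solution).

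For the Lipschitz bound I would define $W(t,x,\xi) = \langle U(t,x), \xi \rangle$ and $Z_\beta(t,x,\xi) = \langle \xi, \nabla_x W \rangle - \beta |\nabla_x W|^2$ exactly as in Theorem \ref{amonotone}. The PDE for $Z_\beta$ is the integrated analogue of (\ref{systZ}). The pointwise estimate $|\nabla_x W|^2 - 2\langle \nabla_x W(t, Tx, T\xi - e_T), S_T \nabla_x W \rangle + |\nabla_x W(t, Tx, T\xi - e_T)|^2 \geq (1 - \|S_T\|^2) |\nabla_x W|^2$ integrates against $\mu$ to a term bounded below by $-\beta \lambda M |\nabla_x W|^2$, so the analogue of (\ref{betacond}) reads $\beta \leq \min(\alpha / (\lambda M + \|\nabla_x G\| + 2\|\nabla_x F\|), \|\nabla_x U_0\|)$, which is strictly positive by the first paragraph. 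The maximum principle then yields $Z_\beta \geq 0$, hence the uniform Lipschitz bound $|\nabla_x U| \leq \beta^{-1}$.

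The main obstacle is existence. I would approximate $\mu$ by discrete measures $\mu_n = \sum_{i=1}^{k_n} c_i^n \delta_{T_i^n}$ with $\mu_n \rightharpoonup \mu$ and $\int_{\mathcal{X}} (\|S_T\|^2 - 1)^+ d\mu_n(T) \to M$. For each $n$ the system reduces to a finite collection of jump terms, for which existence of a smooth monotone solution follows from a routine iteration of the argument behind Theorem \ref{amonotone}. The Lipschitz estimate of the previous paragraph is independent of $n$, so Arzela-Ascoli yields a locally uniformly convergent subsequence whose limit $U$ satisfies the integral equation in the sense of distributions; monotonicity passes to the limit by the argument of the second paragraph, and higher regularity is bootstrapped from the Lipschitz bound as indicated in \citep{lions2007cours}, delivering the desired smooth monotone solution.
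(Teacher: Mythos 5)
The paper gives no written proof of this statement: it is presented as a ``straightforward application'' of Proposition \ref{monotone1} and Theorem \ref{amonotone}, and your proposal is essentially the argument the authors have in mind. Your treatment of monotonicity and of the Lipschitz bound is the paper's own machinery run with the jump term integrated in $T$: the identity $S_T(x-y)=Tx-Ty$ turns the nonlocal term in the equation for $W$ into $\lambda\int_{\mathcal{X}}\big(W(t,x,y)-W(t,Tx,Ty)\big)\,d\mu(T)$, Lemma \ref{max} applies verbatim to such an integrated jump (each integrand is nonpositive at the touching point), and compactness of $\mathcal{X}$ gives the uniform constant $M$ replacing $\lambda(\|S\|^2-1)^+$ in (\ref{betacond}). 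That part is correct and matches the intended route.

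The one thin point is existence. Proposition \ref{monotone1} and Theorem \ref{amonotone} are a priori statements about solutions assumed to exist, so existence for your discretized measures $\mu_n$ does not follow from ``a routine iteration of the argument behind Theorem \ref{amonotone}''; you need an actual construction (short-time solvability of the hyperbolic system, e.g.\ by a fixed point or vanishing viscosity as in \citep{lions2007cours}), which the uniform Lipschitz and monotonicity estimates then extend globally in time. Once that reference is invoked for the construction, your argument is complete at the same level of rigor as the paper; note also that the discretization of $\mu$ is harmless but unnecessary, since all the estimates and the existence machinery apply directly to the integrated equation.
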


\subsection{Asymptotic differential operators}
In this section, we present how we can derive, from a common noise term, higher order terms in (\ref{MFGnl})(first order and second order terms) which conserve the monotonicity property, and thus the well-posedness of this equation. In the previous section we showed that adding the term ($T$ is assumed to be linear in all the rest of this section) :
\[
\lambda (U - T^* U(T \cdot))
\] 
does not alter the propagation of monotonicity. Formally, if we set $\lambda = \epsilon^{-1}$ and $T = Id + \epsilon S$, then 
\[
\epsilon^{-1}(U - T^*U(T \cdot)) \underset{\epsilon \to 0}{\longrightarrow} - (Sx \cdot \nabla)U(x) - S^*U(x).
\]
Also if we define $\lambda = \epsilon^{-2}$; $T_+ = Id + \epsilon S$ and $T_- = Id - \epsilon S$, then once again formally :
\[
\epsilon^{-2}(U - T_+^*U(T_+ \cdot) + U - T_-^*U(T_- \cdot)) \underset{\epsilon \to 0}{\longrightarrow} - 2 S^*DU(x).Sx -  D^2U.Sx.Sx
\]
Thus the propagation of monotonicity shall hold for these two equations :
\begin{equation}\label{1storder}
\begin{cases}
\partial_t U(t,x) + (F(x, U) \cdot \nabla)U - (Sx \cdot \nabla)U(x) - S^*U(x) = G(x,U) \text{ in } (0,t_f)\times \mathbb{R^d};\\
U(0,x) = U_0(x) \text{ in } \mathbb{R^d};
\end{cases}
\end{equation}

\begin{equation}\label{2ndorder}
\begin{cases}
\partial_t U(t,x) + (F(x, U) \cdot \nabla)U -  S^*(Sx \cdot \nabla )U(x) - 2(Sx \cdot \nabla^2 \cdot Sx) U = G(x,U) \text{ in } (0,t_f)\times \mathbb{R^d};\\
U(0,x) = U_0(x) \text{ in } \mathbb{R^d}.
\end{cases}
\end{equation}

To avoid going into technical difficulties we do not address here the question of the regularity of (\ref{1storder}) and (\ref{2ndorder}). We restrict ourselves to the proof of the fact that the monotonicity indeed propagates, as it is expected. The following result presents this fact.

\begin{Theorem}
Assume that $U_0$ and $(G,F)$ are monotone and let $U$ be a solution of either \eqref{1storder} or \eqref{2ndorder}, then $U$ is monotone for all time.
\end{Theorem}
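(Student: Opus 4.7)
The plan is to adapt the argument of Proposition~\ref{monotone1} directly. Let $U$ be a classical solution of \eqref{1storder} (resp.\ \eqref{2ndorder}) and introduce
\[
W(t,x,y) = \langle U(t,x) - U(t,y),\, x-y\rangle.
\]
By monotonicity of $U_0$ we have $W(0,\cdot,\cdot)\geq 0$ everywhere, so it is enough to derive the linear PDE satisfied by $W$, check that its right-hand side is non-negative (by monotonicity of $(G,F)$) and that its spatial part falls under the scope of the maximum principle Lemma~\ref{max} of the appendix; the conclusion $W(t,\cdot,\cdot)\geq 0$ for all $t$ is then, by definition, the monotonicity of $U(t,\cdot)$.

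The classical terms $(F\cdot\nabla)U$ and $G$ produce, exactly as in Proposition~\ref{monotone1}, a transport part $F(x,U(t,x))\cdot\nabla_x W + F(y,U(t,y))\cdot\nabla_y W$ plus the non-negative source
\[
\langle F(x,U(t,x))-F(y,U(t,y)),\, U(t,x)-U(t,y)\rangle + \langle G(x,U(t,x))-G(y,U(t,y)),\, x-y\rangle.
\]
This relies on the basic identity $\langle(F\cdot\nabla_x)U(t,x),\,x-y\rangle = F\cdot\nabla_x W - \langle F,\, U(t,x)-U(t,y)\rangle$ and its analog at $y$. For equation \eqref{1storder}, applying the same identity with $F$ replaced by $Sx$ turns $-(Sx\cdot\nabla)U$ into an additional transport $-Sx\cdot\nabla_x W - Sy\cdot\nabla_y W$ plus an algebraic residual $-\langle S(x-y),\,U(t,x)-U(t,y)\rangle$, while the term $-S^*U$ contributes exactly the opposite $+\langle S(x-y),\,U(t,x)-U(t,y)\rangle$. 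These residuals cancel, leaving a first-order transport PDE for $W$ with non-negative source, to which Lemma~\ref{max} applies.

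The main technical obstacle is \eqref{2ndorder}, where second derivatives of $U$ enter. The identity
\[
\partial^2_{x_j x_k} W = \sum_i (\partial^2_{x_j x_k} U_i)(x_i-y_i) + \partial_{x_j}U_k + \partial_{x_k}U_j
\]
yields $\langle (Sx\cdot\nabla_x)^2 U(t,x),\, x-y\rangle = (Sx\cdot\nabla_x)^2 W - 2\langle Sx,\,(Sx\cdot\nabla_x)U(t,x)\rangle$, together with the sign-reversed analog in $y$. Expanding $\langle S^*(Sx\cdot\nabla_x)U(t,x),\,x-y\rangle = \langle Sx-Sy,\,(Sx\cdot\nabla_x)U(t,x)\rangle$ produces both diagonal residuals $\langle Sx,(Sx\cdot\nabla_x)U(t,x)\rangle$ and cross residuals $\langle Sy,(Sx\cdot\nabla_x)U(t,x)\rangle$ (with symmetric ones at $y$). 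With the coefficients dictated by the formal limit computation preceding \eqref{2ndorder}, the diagonal residuals cancel, and the cross residuals reassemble, via the mixed identity $\partial^2_{x_l y_m} W = -\partial_{y_m}U_l(t,y)-\partial_{x_l}U_m(t,x)$, into the off-diagonal block of the Hessian of $W$ in $(x,y)$. The total noise contribution to $\partial_t W$ then reads
\[
\bigl\langle (Sx,Sy),\; D^2_{(x,y)} W \cdot (Sx,Sy)\bigr\rangle,
\]
a non-negative (rank-one, hence degenerate) quadratic form in the full Hessian of $W$ in the $2d$-dimensional variable $(x,y)$. The equation for $W$ is therefore a linear degenerate parabolic equation with non-negative source, and Lemma~\ref{max} again yields $W\geq 0$, i.e.\ $U(t,\cdot)$ is monotone for all $t$. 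The delicate point is this last algebraic reassembly: the precise matching between the first-order and second-order coefficients dictated by the formal expansion is exactly what is needed to convert all the residual first-derivative terms into the mixed block of $D^2 W$.
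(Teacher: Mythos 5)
Your proposal is correct and takes essentially the same route as the paper: introduce $W(t,x,y)=\langle U(t,x)-U(t,y),x-y\rangle$, derive the linear transport (resp.\ degenerate second-order) equation it satisfies, use monotonicity of $(G,F)$ for the sign of the source and of $U_0$ for the initial sign, and conclude with Lemma~\ref{max}. If anything, your bookkeeping for \eqref{2ndorder} is more careful than the paper's displayed $W$-equation: you make explicit that the diagonal first-derivative residuals cancel only with the $2{:}1$ coefficient ratio dictated by the formal limit preceding \eqref{2ndorder}, and you correctly reassemble the cross residuals into the mixed block so that the noise contribution is the full degenerate quadratic form $\bigl\langle (Sx,Sy),\,D^2_{(x,y)}W\,(Sx,Sy)\bigr\rangle$, whereas the paper records only the two diagonal blocks.
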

\begin{proof}
We begin with the case of \eqref{1storder}. We consider $U$ and $V$ two solutions of \eqref{1storder}. As we did in the proof of \ref{monotone1}, we define $W$ by
\[
W(t,x,y) = < U(t,x) - V(t,y), x - y>.
\]
Let us remark that $W$ satisfies :
\[
\begin{aligned}
\partial_t W +  & F(x,U) \cdot \nabla_x W + F(y,V)\cdot \nabla_y W - <(\nabla_x W - \nabla_y W), Sx -Sy> \\
& = <G(x,U) - G(y,V), x - y> + < F(x,U) - F(y, V), U(t,x) - V(t,y)>.
\end{aligned}
\]
Hence, concluding as we did in proposition \ref{monotone1}, if $W$ is positive at the initial time, it stays positive for all time and thus the propagation of monotonicity is proved. If $U$ and $V$ are solutions of \eqref{2ndorder}, then $W$ satisfies :
\[
\begin{aligned}
\partial_t W +  & F(x,U) \cdot \nabla_x W + F(y,V)\cdot \nabla_y W - Sx \nabla_{xx} W Sx  - Sy\nabla_{yy} W Sy \\
& = <G(x,U) - G(y,V), x - y> + < F(x,U) - F(y, V), U(t,x) - V(t,y)>.
\end{aligned}
\]
Once again, we can conclude with lemma \ref{max} (in appendix) that if $W$ is positive at the initial time it remains positive for all time.
\end{proof}

\section{ From MFG to agent based models}
In this last part, we investigate to which extent MFG models can approach agent based models (ABM). The terminology ABM is often used in Economics and other sciences to refer to a model in which macroscopic phenomena are captured by aggregating individual actions. The main difference with MFG models is that in the ABM, the agents do not anticipate on the behavior of the other agents, hence they are not entirely rational but obey a given "mechanical" rule. For this reason, ABM are purely forward models. The main idea of this section is that one can measure the weight of anticipation for a given agent in a MFG model through the inter temporal preference parameter, thus taking this parameter to infinity should allow to recover an agent-based model from a MFG model. Indeed the players will no longer anticipate as they have no interest in the future. In the following MFG system the inter temporal preference parameter is always denoted by $\lambda$ (which has nothing to do with the parameter $\lambda$ in the part on the common noise) :

\begin{equation}\label{mfglambda}
\begin{cases}
-\partial_t u - \nu \Delta u + H(t,x, \nabla u, m) + \lambda u = 0 \text{ in } (0,T) \times \mathbb{R}^d;\\
\partial_t m - \nu \Delta m - div(D_pH(t,x,\nabla u, m)m) = 0 \text{ in } (0,T)\times \mathbb{R}^d;\\
u(T) = \phi(m(T)) ; m(0) = m_0 \text{ in } \mathbb{R}^d.
\end{cases}
\end{equation}
In this system, $H(x, p , m)$ still denotes the Hamiltonian of the optimal control problem faced by the players. As $\lambda$ increases, the players are more and more interested in the present over the future. The main idea of this section is that letting $\lambda$ go to infinity, one expects that $u$ goes to $0$ and that $(\ref{mfglambda})$ reduces to
\begin{equation}\label{agentbased}
\begin{cases}
\partial_t m - \nu \Delta m - div(D_pH(t,x,0, m)m) = 0 \text{ in } (0,T)\times \mathbb{R}^d; \\
m(0) = m_0.
\end{cases}
\end{equation}
In the first section we work in a particular case and show how we can establish such a convergence. In the second section, we indicate some extensions of this model without giving proofs. Because this remark on MFG is quite general, we prefer to explain the approach on a simple example rather than trying to obtain the most general results.

\subsection{An example of convergence results}
We here give some assumptions we make on the Hamiltonian for this section :
\begin{itemize}
\item $H$ is Lipschitz in $x$, $t$ and $p$, uniformly in $m$.
\item $H : m \rightarrow ((t,x,p) \rightarrow H(t,x,p,m))$ is continuous from $\mathcal{C}((0,T), \mathcal{P})$ to the set of Lipschitz functions. 
\end{itemize}
The following result captures the idea that MFG models approach ABM.

\begin{Theorem}\label{convergence}
For any $\lambda > 0$, we denote by $(u_{\lambda}, m_{\lambda}) \in C^{1,2, \alpha} \times \mathcal{C}((0,T), \mathcal{P})$ a solution of (\ref{mfglambda}) (with $0 < \alpha < 1$). This sequence is bounded and any limit point $m_*$ of the sequence $(m_{\lambda})_{\lambda > 0}$ is a weak solution of :
\begin{equation*}
\begin{cases}
\partial_t m - \nu \Delta m - div(D_pH(t,x,0, m)m) = 0 \text{ in } (0,T)\times \mathbb{R}^d;\\
m(0) = m_0 \text{ in } \mathbb{R}^d.
\end{cases}
\end{equation*}
\end{Theorem}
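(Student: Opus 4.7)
The plan is to obtain uniform a priori estimates on $(u_\lambda, m_\lambda)$, extract a convergent subsequence of $(m_\lambda)$, show that $\nabla u_\lambda$ vanishes in a strong sense away from the terminal time, and pass to the limit in the weak formulation of the Fokker--Planck equation.

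First I would establish the bounds on $u_\lambda$. The HJB equation with discount factor $\lambda$ admits the stochastic representation
\[
u_\lambda(t,x) = \inf_{\alpha}\mathbb{E}\Bigl[e^{-\lambda(T-t)}\phi(m_\lambda(T))(X_T) + \int_t^T e^{-\lambda(s-t)} L(s, X_s, \alpha_s, m_\lambda(s))\,ds\Bigr],
\]
where $L$ is the Legendre transform of $H$; since $H$ is Lipschitz in $p$ uniformly in $m$, $L$ is bounded on bounded sets, and one obtains (by comparison, or directly from the formula) the pointwise bound $|u_\lambda(t,x)| \leq \|\phi\|_\infty e^{-\lambda(T-t)} + C/\lambda$. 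Classical parabolic regularity applied to (\ref{mfglambda}), together with the Lipschitz assumption on $H$ in $x$ and $p$, then yields a uniform Lipschitz bound $\|\nabla u_\lambda\|_\infty \leq C$ and, more importantly, an analogous decay estimate $\|\nabla u_\lambda(t,\cdot)\|_\infty \leq C e^{-\lambda(T-t)} + C/\lambda$. In particular, for every $\delta>0$,
\[
\nabla u_\lambda \longrightarrow 0 \quad \text{uniformly on } [0, T-\delta]\times\mathbb{R}^d.
\]

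Second, I would obtain compactness for $(m_\lambda)$. Each $m_\lambda$ is a probability measure, and the drift $D_pH(t,x,\nabla u_\lambda,m_\lambda)$ is uniformly bounded by the Lipschitz constant of $H$. Standard energy/duality estimates for the linear Fokker--Planck equation with bounded drift and non-degenerate diffusion $\nu$ deliver tightness in $x$ and equicontinuity in $t$ in the Wasserstein or narrow topology, so that a subsequence (still denoted $m_\lambda$) converges to some $m_\ast \in \mathcal{C}((0,T),\mathcal{P})$. The continuity hypothesis on $m \mapsto H(\cdot,\cdot,\cdot,m)$ then gives $D_pH(\cdot,\cdot,0,m_\lambda) \to D_pH(\cdot,\cdot,0,m_\ast)$ in the appropriate sense.

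Third, I would pass to the limit in the weak form of the Fokker--Planck equation: for $\psi \in C^\infty_c([0,T)\times\mathbb{R}^d)$,
\[
-\!\int\!\psi(0,\cdot)\,dm_0 \,=\, \int_0^T\!\!\int \bigl(\partial_t\psi + \nu\Delta\psi + D_pH(t,x,\nabla u_\lambda,m_\lambda)\!\cdot\!\nabla\psi\bigr)\,dm_\lambda(x)\,dt.
\]
Splitting the time integral at $T-\delta$, on $[0,T-\delta]$ the uniform convergence $\nabla u_\lambda \to 0$, the continuity of $D_pH$, and the weak convergence $m_\lambda \to m_\ast$ let one replace the integrand by its limit, while on $[T-\delta,T]$ the uniform boundedness of $D_pH$ and the unit mass of $m_\lambda$ bound the contribution by $C\delta\|\psi\|_{C^{1,2}}$. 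Sending first $\lambda\to\infty$ and then $\delta\to 0$ produces the weak formulation of (\ref{agentbased}) with initial datum $m_0$.

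The main obstacle, and the reason I would proceed via the weak formulation rather than by strong PDE convergence, is the terminal boundary layer: the constraint $u_\lambda(T) = \phi(m_\lambda(T))$ prevents $u_\lambda$ and $\nabla u_\lambda$ from going to $0$ uniformly near $t=T$, so the HJB-driven drift in (\ref{mfglambda}) does not converge pointwise to $D_pH(t,x,0,m_\ast)$ on all of $(0,T)$. Absorbing this thin layer into an $O(\delta)$ error in the integrated formulation is the key technical point; the remaining steps are essentially classical estimates on parabolic systems.
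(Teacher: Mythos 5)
Your proposal is correct in outline but takes a genuinely different route from the paper. The paper's proof rests on a single $L^2$ energy identity: testing the HJB equation in (\ref{mfglambda}) with $-\Delta u_\lambda$ produces the term $\lambda\int_t^T\int|\nabla u_\lambda|^2$, and once $\Delta u_\lambda$ is bounded in $L^2$ through the growth of $H$, this factor $\lambda$ forces $u_\lambda\to 0$ in $L^2((0,T),H^1)$; the limit in the weak Fokker--Planck formulation is then taken directly on the whole time interval, with no boundary-layer decomposition, because the smallness is already integrated in time. You instead work with sup-norm comparison estimates --- the zeroth-order bound $|u_\lambda|\le\|\phi\|_\infty e^{-\lambda(T-t)}+C/\lambda$ and its gradient analogue --- and then split the weak formulation at $T-\delta$. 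Your route buys a pointwise, quantitative description of the terminal layer and uniform (rather than $L^2$) convergence of $\nabla u_\lambda$ away from $t=T$; the paper's route buys economy, since one integration by parts does all the work and the equation never needs to be differentiated. Two caveats on your version: the decay estimate $\|\nabla u_\lambda(t,\cdot)\|_\infty\le Ce^{-\lambda(T-t)}+C/\lambda$ is not bare ``classical parabolic regularity'' --- it follows from differentiating the HJB equation (or a Bernstein argument) and applying the maximum principle, exploiting the discount $\lambda$ in the zeroth-order term, and it requires a Lipschitz bound on $\phi(m_\lambda(T))$ uniform in $\lambda$, which you should state explicitly (the paper implicitly assumes the analogous $L^2$ bound on $\nabla u_\lambda(T)$); and the Legendre-transform representation presupposes convexity of $H$ in $p$, which is not assumed, so only the comparison-principle version of your zeroth-order bound should be kept (it needs $H(t,x,0,m)$ bounded, a growth hypothesis of the same nature as the one the paper invokes). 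Also, the drift in your weak formulation should appear as $-D_pH\cdot\nabla\psi$ rather than $+D_pH\cdot\nabla\psi$; this sign does not affect the limiting argument.
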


\begin{proof}
From classical estimates on Hamilton-Jacobi-Bellman equations and on Fokker-Planck equations, we know that solutions of $(\ref{mfglambda})$ exist for all $\lambda > 0$, as a consequence of Schauder's fixed point theorem. For any sequence $(u_{\lambda}, m_{\lambda})_{\lambda}$ of solutions of (\ref{mfglambda}), $(u_{\lambda})_{\lambda > 0}$ is bounded in $L^2(H^2)$. Indeed, if we multiply the equation satisfied by $u_{\lambda}$ by $-\Delta u_{\lambda}$ and we integrate, we obtain 
\begin{equation}
-\int_t^T\int_{\mathbb{R}^d} \frac{1}{2} \frac{d}{dt} |\nabla u_{\lambda}|^2 + \nu \int_t^T \int_{\mathbb{R}^d} (\Delta u_{\lambda})^2 + \lambda \int_t^T \int_{\mathbb{R}^d} |\nabla u_{\lambda} |^2 = \int_t^T \int_{\mathbb{R}^d} \Delta u_{\lambda} H(s,x, \nabla u_{\lambda}, m_{\lambda})dxds ;
\end{equation}
we then deduce
\[
\nu \int_t^T \int_{\mathbb{R}^d} (\Delta u_{\lambda})^2 +\frac{1}{2}\int_{\mathbb{R}^d}|\nabla u_{\lambda}|^2 (t) \leq \frac{1}{2}\int_{\mathbb{R}^d}|\nabla u_{\lambda}|^2 (T) +  \int_t^T \int_{\mathbb{R}^d} \Delta u_{\lambda} H(s,x, \nabla u_{\lambda}, m_{\lambda})dxds .
\]
We obtain from the growth assumption on $H$ that $(\Delta u_{\lambda})_{\lambda}$ is a bounded sequence of $L^2((0,T) \times \mathbb{R}^d)$. Thus, $(u_{\lambda})_{\lambda}$ converges to $0$ in $L^2((0,T), H^1(\mathbb{R}^d))$. Because $(D_pH(\cdot,\cdot, \nabla u_{\lambda},m_{\lambda}))_{\lambda}$ is a bounded sequence in $L^{\infty}$, $(m_{\lambda})_{\lambda}$ is a bounded sequence of $\mathcal{C}((0,T), \mathcal{P})$ where we have equipped $\mathcal{P}$ with the Monge-Kantorovich distance. Hence, by passing to the limit in the weak formulation of the equation satisfied by $m_{\lambda}$, we deduce that any limit point of this sequence is a weak solution of the Fokker-Planck equation (\ref{agentbased}).
\end{proof}

We now show that an ABM can be approximated by a MFG system. The result we present here is that, if it is regular enough, there is always at least one solution of the agent based model which is the limit of a MFG. Indeed let us take an ABM defined by a smooth vector field $B$ such that  :
\[
\exists C >0, \forall t \in (0,T), \forall x \in \mathbb{R}^d, \forall m \in \mathcal{C}((0,T), \mathcal{P}), ||B(t,x,m) ||_{L^{\infty}} \leq C.
\] 
The corresponding so called agent-based model is then :
\begin{equation}\label{exagbm}
\begin{cases}
\partial_t m - \nu \Delta m - div(B(t,x,m)m) = 0 \text{ in } (0,T)\times \mathbb{R}^d;\\
m(0) = m_0.\\
\end{cases}
\end{equation}

Let us take any $\delta > 0$ and define by $H$ the following hamiltonian :
\[
H(t,x,p,m) = B(t,x,m)\cdot p +  \delta|p|^2.
\]

\begin{Prop}\label{ABMtoMFG}
For any sequence of solutions $(u_{\lambda}, m_{\lambda})_{\lambda}$ of :
\begin{equation}\label{approxsystem}
\begin{cases}
-\partial_t u - \nu \Delta u + H(t,x, \nabla u, m) + \lambda u = 0 \text{ in } (0,T)\times \mathbb{R}^d;\\
\partial_t m - \nu \Delta m - div(D_pH(t,x,\nabla u, m)m) = 0 \text{ in } (0,T)\times \mathbb{R}^d;\\
u(T) = 0 ; m(0) = m_0 \text{ in } \mathbb{T}^d;
\end{cases}
\end{equation}
any accumulating point of the bounded sequence $(m_{\lambda})_{\lambda}$ is a solution of the ABM (\ref{exagbm}).
\end{Prop}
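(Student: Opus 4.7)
The plan is to exploit the very specific form of the Hamiltonian $H(t,x,p,m) = B(t,x,m)\cdot p + \delta|p|^2$ chosen here. The crucial observation is that $H(t,x,0,m) \equiv 0$, while the terminal condition in (\ref{approxsystem}) is $u(T) = 0$. The constant function $u \equiv 0$ therefore satisfies both the HJB equation
\[
-\partial_t u - \nu \Delta u + B(t,x,m_\lambda) \cdot \nabla u + \delta |\nabla u|^2 + \lambda u = 0
\]
and the terminal condition identically. Because this semilinear HJB equation has bounded drift $B(t,x,m_\lambda)$, quadratic (in particular locally Lipschitz) gradient nonlinearity, and a dissipative zeroth-order term $\lambda u$ with $\lambda > 0$, the standard comparison principle applies and gives uniqueness of its classical solution; consequently $u_\lambda \equiv 0$ for every $\lambda > 0$.

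Knowing that $u_\lambda \equiv 0$, the drift of the Fokker--Planck equation reduces to
\[
D_pH(t,x,\nabla u_\lambda, m_\lambda) = B(t,x,m_\lambda) + 2\delta \nabla u_\lambda = B(t,x,m_\lambda),
\]
so that $m_\lambda$ is itself a solution of (\ref{exagbm}) for every $\lambda > 0$. If $m_*$ is an accumulation point of the sequence $(m_\lambda)_\lambda$ in $\mathcal{C}((0,T),\mathcal{P})$ equipped with the Monge--Kantorovich distance, then I would pass to the limit in the weak formulation of the Fokker--Planck equation, invoking the assumed continuity of $B$ in $m$ with respect to this distance in order to handle the nonlinear drift. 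This yields that $m_*$ is a weak solution of (\ref{exagbm}), which is exactly the claim.

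The main obstacle, if one instead tried to mimic the proof of Theorem \ref{convergence} directly without using that $u_\lambda \equiv 0$, would be that $H$ is here only quadratic in $p$, and not Lipschitz in $p$ uniformly in $m$ as assumed in Theorem \ref{convergence}: testing the HJB against $-\Delta u_\lambda$ then produces a term of the form $\delta\int |\nabla u_\lambda|^2\, |\Delta u_\lambda|$ that is not directly absorbable by Young's inequality. One would have to complement the energy estimate with a uniform-in-$\lambda$ Lipschitz bound on $u_\lambda$, obtained for instance through a Bernstein-type argument on the HJB, before absorbing the quadratic term into $\nu\int(\Delta u_\lambda)^2$. The direct approach via $u_\lambda \equiv 0$ bypasses this difficulty entirely, and it is the route I would take; the alternative energy-estimate route is worth mentioning only as a sanity check that the result is consistent with Theorem \ref{convergence}.
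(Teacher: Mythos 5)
Your proof is correct, but it takes a genuinely different route from the paper's. The paper's proof is a one-line appeal to Theorem \ref{convergence}: the sequence $(m_{\lambda})_{\lambda}$ is compact and any accumulation point solves the Fokker--Planck equation with drift $D_pH(t,x,0,m)=B(t,x,m)$, which is exactly (\ref{exagbm}). You instead exploit the degenerate structure of (\ref{approxsystem}) --- no running cost, $u(T)=0$, and $H(t,x,0,m)=0$ --- to conclude by comparison for the HJB equation (with $m_{\lambda}$ frozen) that $u_{\lambda}\equiv 0$ for every $\lambda$, so that each $m_{\lambda}$ already solves (\ref{exagbm}) exactly and the statement reduces to stability of weak solutions under Monge--Kantorovich convergence, using the continuity of $B$ in $m$. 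Your route is more elementary and has the merit of bypassing the point you correctly flag: the Hamiltonian $B\cdot p+\delta|p|^2$ is quadratic, hence not Lipschitz in $p$ uniformly in $m$, so the hypotheses of Theorem \ref{convergence} are not literally satisfied and the paper's one-line invocation implicitly relies on this being harmless (e.g.\ via a uniform-in-$\lambda$ gradient bound, as in your Bernstein remark). The trade-off is that your argument depends entirely on the special data: with a nonzero terminal cost or a running cost $f(x,m)$ in the HJB it collapses, whereas the paper's argument (once the Lipschitz issue is addressed) treats the proposition as a true special case of the general convergence theorem. One caveat on your side: uniqueness for the HJB on the unbounded domain $\mathbb{R}^d$ requires restricting to a solution class with a growth or boundedness condition (the $C^{1,2,\alpha}$ classical solutions of Theorem \ref{convergence} suffice, since then $w=u_{\lambda}$ solves a linear parabolic equation with bounded coefficients and zero terminal data); within that class your argument is complete.
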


\begin{proof}
In view of theorem \ref{convergence}, the sequence of solutions of the MFG system (\ref{approxsystem}) is compact and its accumulating points are solutions of the agent-based model(\ref{exagbm}).
\end{proof}

\subsection{More general models}
In this section we give examples of how one can apply the previous remark to more general models. The first one is a MFG, where a part of the running cost has a proportion of the total cost, which varies with the parameter $\lambda$. The second one is a higher order approximation of the limit model. We conclude with cases in which a common noise can occur.

\subsubsection{Relative running cost}
 We here present the limit as $\lambda$ goes to infinity of the following MFG system :
\begin{equation}\label{psim}
\begin{cases}
-\partial_t u - \nu \Delta u + H(x, \nabla u, m) + \lambda (u - \psi(m)) = 0 \text{ in } (0,T) \times \mathbb{R}^d;\\
\partial_t m - \nu \Delta m - div(mD_pH(x,\nabla u, m)) = 0 \text{ in } (0,T)\times \mathbb{R}^d;\\
u(T) = \phi(m(T)) ; m(0) = m_0 \text{ in } \mathbb{R}^d;
\end{cases}
\end{equation}
where $\psi$ is a smooth function of $m$. The interpretation of such a system, in particular of such an HJB equation, is that $\psi(m)$ stands for a cost which has an increasing importance for the players as $\lambda$ goes to infinity. This cost should be interpreted as a cost the players anticipate with the same weight, whatever the length of the game is. The ABM limit of (\ref{psim}) is the following :
\begin{equation}\label{abmpsim}
\begin{cases}
\partial_t m - \nu \Delta m -div(mD_pH(x, \nabla_x(\psi(m)),m)) = 0 \text{ in } (0,T)\times\mathbb{R}^d ;\\
m(0) = m_0 \text{ in } \mathbb{R}^d.
\end{cases}
\end{equation}
Let us remark that from the results on uniqueness in MFG (\citep{lasry2007mean,lions2007cours}), the usual conditions under which (\ref{psim}) has a unique solution, implies in particular that $\psi$ is monotone in $m$ and $H$ is convex in $p$. On the other hand, the higher order term in the first line of (\ref{abmpsim}) is (in the case of a local $\psi$ to simplify notation):
\[
- \nu \Delta m - m\psi'(m) tr(D_{pp}H(x, \nabla_x(\psi(m)), m) \cdot \nabla_{xx}^2m);
\]
where $tr$ stands for the trace operator. Hence, if the Hamiltonian $H$ is convex in $p$ and $\psi$ is monotone in $m$, we deduce that (\ref{abmpsim}) is a non-linear parabolic equation. This remark makes an obvious link between the well-posedness of the two models and the monotonicity of $\psi$.

\subsubsection{Higher order approximation}
We present here a higher order approximation of the Fokker-Planck equation in (\ref{mfglambda}) in the limit $\lambda$ goes to infinity. Let us observe that, at least formally, we have the following in $(0,T) \times \mathbb{T}^d$:
\begin{equation*}\label{devu}
u(t,x) \underset{\lambda \rightarrow + \infty}{\approx} \frac{-1}{\lambda} H(x,0,m) + O(\lambda^{-2}).
\end{equation*}
Thus if we neglect terms of order $\lambda^{-2}$, the resulting Fokker-Planck equation is :
\begin{equation}\label{FPorder1}
\begin{cases}
\partial_t m - \nu \Delta m -div(mD_pH(x, \nabla_x(\frac{-1}{\lambda}H(x,0,m)),m)) = 0 \text{ in } (0,T)\times\mathbb{R}^d ;\\
m(0) = m_0 \text{ in } \mathbb{R}^d.
\end{cases}
\end{equation}
As we did for the relative running cost, we show that under the usual condition of uniqueness for (\ref{mfglambda}), the equation (\ref{FPorder1}) is parabolic. The higher order term in $m$ in (\ref{FPorder1}) is, when $H$ is local in $m$ :
\begin{equation*}
- \nu \Delta m + \frac{1}{\lambda}mD_zH(x,0,m) tr(D_{pp}H(x,\nabla_x(\frac{-1}{\lambda}H(x,0,m)),m)\cdot \nabla^2_{xx}m);
\end{equation*}
where $tr$ still stands for the trace operator and $D_zH$ for the derivative of $H$ with respect to its third variable. We recall here the usual conditions under which (\ref{mfglambda}) has a unique solution (\citep{lions2007cours}) : for any $(x,p,z) \in \mathbb{T}^d \times \mathbb{R}^d \times \mathbb{R}_+$ :
\[
\begin{pmatrix}
zD_{pp}H(x,p,z) & \frac{z}{2}D_{pz}H(x,p,z) \\
\frac{z}{2}D_{pz}H(x,p,z) & - D_zH(x,p,z)
\end{pmatrix} \geq 0.
\] 
Thus, as in the previous case, the uniqueness of solutions of (\ref{mfglambda}) implies the well-posedness of (\ref{FPorder1}). This strongly suggests that the previous assumption on the derivative of $H$ is close from being necessary for the uniqueness in (\ref{mfglambda}).

\subsubsection{The case of common noise : discrete state space}
We now explain why, formally, the addition of a common noise should not alter the property that, in the limit $\lambda$ goes to $\infty$, a MFG converges toward an ABM. We present first a discrete state space case (cf section \ref{sec : mfgn}), and next a continuous state space case with the notations of \citep{cardaliaguet2015master}.\\
\\
For the discrete state space case, we work with a common noise described by an intensity of jump $\beta > 0$ and a linear map $T$. The map $T$ is assumed to be linear for the sake of clarity of the derivation of the equation (\ref{eqtildeu}), but this result apply to more general maps $T$. We introduce the solution $U$ of 
\begin{equation}\label{MFGnld}
\begin{cases}
\partial_t U(t,x) + (F(x, U) \cdot \nabla)U + \lambda U+  \beta (U - T^*U(t, Tx)) = G(x,U) \text{ in } (0,t_f)\times \mathbb{R^d};\\
U(t_f,x) = U_0(x) \text{ in } \mathbb{R^d}.
\end{cases}
\end{equation}
We fix a probability space $(\Omega, \mathcal{A}, \mathbb{P})$. We denote by  $(\tau_i)_{i \in \mathbb{N}}$ a sequence of independent random variables of exponential law of parameter $\beta$. We define the sequence $(t_i)_{i \in \mathbb{N}}$ by 
\[
t_i = \sum_{k = 0}^i \tau_k.
\]
We define for any $t > 0$, $x_0 \in \mathbb{R}^d$ the couple $(V_s, Y_s)_{0 \leq s \leq t}$ by the following :
\begin{equation}\label{chara2}
\begin{cases}
d(e^{\lambda s}V_s) = G(Y_s, U(s, Y_s)) ds ;  \forall s \in (t_i, t_{i+1}) ;\\
dY_s = F(Y_s, U(s, Y_s))ds ;  \forall s \in (t_i, t_{i+1}) ; \\
Y_{t_i^+} = TY_{t_i^-} ; V_{t_i^+} = T^*V_{t_i^-}\\
Y_t = x_0 ; V_0 = U_0(Y(0)).
\end{cases}
\end{equation}
This couple $(V_s, Y_s)$ represents the characteristic associated to (\ref{MFGnld}) in the sense that if we defined $\tilde{U}$ by 
\[
\tilde{U}(t,x_0) = \mathbb{E}[V_t]
\]
with $(V_s, Y_s)_{s \geq 0}$ being defined by (\ref{chara2}), then a formal calculation yields that $\tilde{U}$ solves :
\begin{equation}\label{eqtildeu}
\begin{cases}
\partial_t \tilde{U}(t,x) + (F(x, U) \cdot \nabla)\tilde{U} + \lambda \tilde{U}+  \beta (\tilde{U} - T^*\tilde{U}(t, Tx)) = G(x,U) \text{ in } (0,t_f)\times \mathbb{R^d};\\
U(0,x) = U_0(x) \text{ in } \mathbb{R^d}.
\end{cases}
\end{equation}
Thus formally $\tilde{U} = U$. Hence, the process $(Y_s)_{s \geq 0}$ should represent a realization of the evolution of a population of player facing the MFG modeled by (\ref{MFGnld}) and starting from the configuration $x_0$. Formally, in the limit $\lambda$ goes to infinity, we obtain that that $U$ goes to $0$ uniformly in $x$, and thus the evolution of the process $(Y_s)_{s \geq 0}$ is given by :
\begin{equation}\label{ABMd}
\begin{cases}
dY_s = F(Y_s, 0)ds ;  \forall s \in (t_i, t_{i+1}) ; \\
Y_0 = x_0 ; Y_{t_i^+} = TY_{t_i^-} ;
\end{cases}
\end{equation}
where the sequence $(t_i)_{i \in \mathbb{N}}$ has the same law as in (\ref{chara2}). Hence, the "model" (\ref{ABMd}) is the limit ABM one should obtain in the limit $\lambda$ goes to infinity for (\ref{MFGnld}).

\subsubsection{The case of common noise : the infinite dimensional case}
Assume now that the MFG is in infinite dimension and that the master equation is given by :
\begin{equation}\label{masterequation}
\begin{cases}
\begin{aligned}
&-\partial_t U - (1+ \beta) \Delta_x U + H(x, D_x U) - (1 + \beta)\int_{\mathbb{R}^d}div_y [D_m U] dm(y)\\
&+ \int_{\mathbb{R}^d}D_m U \cdot D_p H(y, D_x U)dm(y)- 2\beta \int_{\mathbb{R}^d}dix_x[D_m U]dm(y) \\
&- \beta \int_{\mathbb{R}^{2d}}Tr[D^2_{mm}U]dm \otimes dm + \lambda U= F(x,m) \text{ in } [0,T] \times \mathbb{R}^d \times \mathcal{P}(\mathbb{R}^d);
\end{aligned}\\
U(T,x,m) = G(x, m) \text{ in } \mathbb{R}^d \times \mathcal{P}(\mathbb{R}^d).
\end{cases}
\end{equation}
In this setting $H$ is the hamiltonian of the optimal control problem faced by the players, $F$ the running cost and $G$ the terminal cost. We refer to \citep{cardaliaguet2015master} for a precise description of the MFG this master equation models and for definitions of the derivatives in the space of measures. We just recall here that $\beta$ is positive parameter which describes the intensity of the common noise ; that $F$ and $G$ satisfies some strong regularity assumptions and that $H$ is smooth, globally Lipschitz and coercive in its second argument. We denote by $(\Omega, \mathcal{A}, \mathbb{P})$ a probability space. In the context modeled by (\ref{masterequation}), the forward-backward system (\ref{MFG}) takes the form :
\begin{equation}\label{MFGnoise}
\begin{cases}
d_t u_t= \{ -(1 + \beta)\Delta u_t + H(x, D_x u_t) + \lambda u_t - F(x, m_t) - \sqrt{2 \beta}div(v_t)\} dt + v_t \cdot \sqrt{2 \beta} d W_t \\
\text{ in } (0,T) \times \mathbb{R}^d;\\
d_t m_t = \{(1+ \beta)\Delta m_t + div(m_t D_p H(x, D_x u_t))\} dt - div(m_t\sqrt{2\beta}W_t) \text{ in } (0,T) \times \mathbb{R}^d;\\
m_0 = m_0 ; u_T = G(x, m_T) \text{ in } \mathbb{R}^d.
\end{cases}
\end{equation}
The process $(W_t)_{0 \leq t \leq T}$ is a standard $d$ dimensional bronwnian motion under the probability space $(\Omega, \mathcal{A}, \mathbb{P})$. The unknown are the three random functions $(u_t, m_t, v_t)_{0 \leq t \leq T}$ and $(v_t)_{0 \leq t \leq T}$ is a term which forces $u_t$ to be measurable with respect to the filtration generated by $(W_t)_{0 \leq t}$. Formally as $\lambda$ goes to infinity, the solution $U$ of (\ref{masterequation}) goes to $0$ and thus (\ref{MFGnoise}) reduces to :
\begin{equation}
\begin{cases}
d_t m_t = \{(1+ \beta)\Delta m_t + div(m_t D_p H(x, 0))\} dt - div(m_t\sqrt{2\beta}W_t) \text{ in } (0,T) \times \mathbb{R}^d;\\
m_0 = m_0 \text{ in } \mathbb{R}^d.
\end{cases}
\end{equation}
This nonlinear stochastic Fokker-Planck equation is the limit ABM.

\bibliographystyle{plainnat}
\bibliography{bibremarks}

\begin{thebibliography}{26}
\providecommand{\natexlab}[1]{#1}
\providecommand{\url}[1]{\texttt{#1}}
\expandafter\ifx\csname urlstyle\endcsname\relax
  \providecommand{\doi}[1]{doi: #1}\else
  \providecommand{\doi}{doi: \begingroup \urlstyle{rm}\Url}\fi

\bibitem[Achdou and Capuzzo-Dolcetta(2010)]{achdou2010mean}
Yves Achdou and Italo Capuzzo-Dolcetta.
\newblock Mean field games: Numerical methods.
\newblock \emph{SIAM Journal on Numerical Analysis}, 48\penalty0 (3):\penalty0
  1136--1162, 2010.

\bibitem[Benamou et~al.(2017)Benamou, Carlier, and
  Santambrogio]{benamou2017variational}
Jean-David Benamou, Guillaume Carlier, and Filippo Santambrogio.
\newblock Variational mean field games.
\newblock In \emph{Active Particles, Volume 1}, pages 141--171. Springer, 2017.

\bibitem[Bertucci(2017)]{bertucci2017optimal}
Charles Bertucci.
\newblock Optimal stopping in mean field games, an obstacle problem approach.
\newblock \emph{arXiv preprint arXiv:1704.06553}, 2017.

\bibitem[Bertucci(2018)]{bertucci2018fokker}
Charles Bertucci.
\newblock Fokker-planck equation of jumping particles and mean field games of
  impulse control.
\newblock \emph{to appear}, 2018.

\bibitem[Briceno-Arias et~al.(2016)Briceno-Arias, Kalise, and
  Silva]{briceno2016proximal}
LM~Briceno-Arias, D~Kalise, and FJ~Silva.
\newblock Proximal methods for stationary mean field games with local
  couplings.
\newblock \emph{arXiv preprint arXiv:1608.07701}, 2016.

\bibitem[Cardaliaguet and Hadikhanloo(2015)]{cardaliaguet2015learning}
Pierre Cardaliaguet and Saeed Hadikhanloo.
\newblock Learning in mean field games: the fictitious play.
\newblock \emph{arXiv preprint arXiv:1507.06280}, 2015.

\bibitem[Cardaliaguet and Lehalle(2016)]{cardaliaguet2016mean}
Pierre Cardaliaguet and Charles-Albert Lehalle.
\newblock Mean field game of controls and an application to trade crowding.
\newblock \emph{arXiv preprint arXiv:1610.09904}, 2016.

\bibitem[Cardaliaguet and Porretta(2017)]{cardaliaguet2017long}
Pierre Cardaliaguet and Alessio Porretta.
\newblock Long time behavior of the master equation in mean-field game theory.
\newblock \emph{arXiv preprint arXiv:1709.04215}, 2017.

\bibitem[Cardaliaguet et~al.(2012)Cardaliaguet, Lasry, Lions, Porretta,
  et~al.]{cardaliaguet2012long}
Pierre Cardaliaguet, Jean-Michel Lasry, Pierre-Louis Lions, Alessio Porretta,
  et~al.
\newblock Long time average of mean field games.
\newblock \emph{NHM}, 7\penalty0 (2):\penalty0 279--301, 2012.

\bibitem[Cardaliaguet et~al.(2015{\natexlab{a}})Cardaliaguet, Delarue, Lasry,
  and Lions]{cardaliaguet2015master}
Pierre Cardaliaguet, Fran{\c{c}}ois Delarue, Jean-Michel Lasry, and
  Pierre-Louis Lions.
\newblock The master equation and the convergence problem in mean field games.
\newblock \emph{arXiv preprint arXiv:1509.02505}, 2015{\natexlab{a}}.

\bibitem[Cardaliaguet et~al.(2015{\natexlab{b}})Cardaliaguet, Graber, Porretta,
  and Tonon]{cardaliaguet2015second}
Pierre Cardaliaguet, P~Jameson Graber, Alessio Porretta, and Daniela Tonon.
\newblock Second order mean field games with degenerate diffusion and local
  coupling.
\newblock \emph{Nonlinear Differential Equations and Applications NoDEA},
  22\penalty0 (5):\penalty0 1287--1317, 2015{\natexlab{b}}.

\bibitem[Carmona and Delarue(2017{\natexlab{a}})]{carmona2017probabilistic}
R~Carmona and F~Delarue.
\newblock Probabilistic theory of mean field games: vol. i, mean field fbsdes,
  control, and games.
\newblock \emph{Stochastic Analysis and Applications. Springer Verlag},
  2017{\natexlab{a}}.

\bibitem[Carmona and Delarue(2017{\natexlab{b}})]{carmona2017probabilistic2}
R~Carmona and F~Delarue.
\newblock Probabilistic theory of mean field games: vol. ii, mean field games
  with common noise and master equations.
\newblock \emph{Stochastic Analysis and Applications. Springer Verlag},
  2017{\natexlab{b}}.

\bibitem[Carmona and Delarue(2013)]{carmona2013probabilistic}
Ren{\'e} Carmona and Fran{\c{c}}ois Delarue.
\newblock Probabilistic analysis of mean-field games.
\newblock \emph{SIAM Journal on Control and Optimization}, 51\penalty0
  (4):\penalty0 2705--2734, 2013.

\bibitem[Carmona et~al.(2015)Carmona, Lacker, et~al.]{carmona2015probabilistic}
Ren{\'e} Carmona, Daniel Lacker, et~al.
\newblock A probabilistic weak formulation of mean field games and
  applications.
\newblock \emph{The Annals of Applied Probability}, 25\penalty0 (3):\penalty0
  1189--1231, 2015.

\bibitem[Carmona et~al.(2016)Carmona, Delarue, and Lacker]{carmona2016mean}
Rene Carmona, Fran{\c{c}}ois Delarue, and Daniel Lacker.
\newblock Mean field games of timing and models for bank runs.
\newblock \emph{arXiv preprint arXiv:1606.03709}, 2016.

\bibitem[Conze et~al.(1991)Conze, Lasry, and Scheinkman]{conze1991system}
A~Conze, J-M Lasry, and JA~Scheinkman.
\newblock A system of non-linear functional differential equations arising in
  an equilibrium model of an economy with borrowing constraints.
\newblock In \emph{Annales de l'Institut Henri Poincare (C) Non Linear
  Analysis}, volume~8, pages 523--559, 1991.

\bibitem[Conze et~al.(1993)Conze, Lasry, and Scheinkman]{conze1993borrowing}
Antoine Conze, Jean-Michel Lasry, and Jose Scheinkman.
\newblock Borrowing constraints and international comovements.
\newblock \emph{Hitotsubashi journal of economics}, pages 23--47, 1993.

\bibitem[Delaure et~al.(2018)Delaure, Lacker, and
  Ramanan]{delarue2018fluctuations}
F~Delaure, D~Lacker, and K~Ramanan.
\newblock From the master equations to mean field games limits, fluctuations
  and large deviations.
\newblock \emph{mien}, 2018.

\bibitem[Gomes et~al.(2014)Gomes, Patrizi, and Voskanyan]{Gomes2014existence}
Diogo~A Gomes, Stefania Patrizi, and Vardan Voskanyan.
\newblock On the existence of classical solutions for stationary extended mean
  field games.
\newblock \emph{Nonlinear Analysis: Theory, Methods \& Applications},
  99:\penalty0 49--79, 2014.

\bibitem[Lacker(2015)]{lacker2015mean}
Daniel Lacker.
\newblock Mean field games via controlled martingale problems: existence of
  markovian equilibria.
\newblock \emph{Stochastic Processes and their Applications}, 125\penalty0
  (7):\penalty0 2856--2894, 2015.

\bibitem[Lasry and Lions(2006{\natexlab{a}})]{lasry2006jeux}
Jean-Michel Lasry and Pierre-Louis Lions.
\newblock Jeux {\`a} champ moyen. i--le cas stationnaire.
\newblock \emph{Comptes Rendus Math{\'e}matique}, 343\penalty0 (9):\penalty0
  619--625, 2006{\natexlab{a}}.

\bibitem[Lasry and Lions(2006{\natexlab{b}})]{lasry2006jeux2}
Jean-Michel Lasry and Pierre-Louis Lions.
\newblock Jeux {\`a} champ moyen. ii--horizon fini et contr{\^o}le optimal.
\newblock \emph{Comptes Rendus Math{\'e}matique}, 343\penalty0 (10):\penalty0
  679--684, 2006{\natexlab{b}}.

\bibitem[Lasry and Lions(2007)]{lasry2007mean}
Jean-Michel Lasry and Pierre-Louis Lions.
\newblock Mean field games.
\newblock \emph{Japanese Journal of Mathematics}, 2\penalty0 (1):\penalty0
  229--260, 2007.

\bibitem[Lions(2007)]{lions2007cours}
Pierre-Louis Lions.
\newblock Cours au college de france.
\newblock \emph{www.college-de-france.fr}, 2011, 2007.

\bibitem[Nutz(2016)]{nutz2016mean}
Marcel Nutz.
\newblock A mean field game of optimal stopping.
\newblock \emph{arXiv preprint arXiv:1605.09112}, 2016.

\end{thebibliography}

\appendix
\section{A maximum principle result}
We present a maximum principle result which is quite general and that we use several times in this paper. Although this result is not new (see \citep{lions2007cours} for example), we recall it and give a simple proof for the sake of completeness.
\begin{Lemma}\label{max}
Let $u$ be a smooth function from $(0,T)\times \mathbb{R}^d$ to $\mathbb{R}$ which satisfies :
\begin{equation}
\begin{cases}
\partial_t u - A(t,x,\nabla_x^2 u) + B(t,x, \nabla_x u) + \alpha(t,x) (u - u(t,f(x))) \geq 0 \text{ in } (0,T) \times \mathbb{R}^d;\\
u(0) \geq 0 \text{ in } \mathbb{R}^d;
\end{cases}
\end{equation}
where $A$ is a uniform elliptic term which is linear in its third argument, $B$ is linear with respect to its third argument, $\alpha \in L^{\infty}((0,T) \times \mathbb{R}^d)$, $\alpha \geq 0$ and $f$ maps $\mathbb{R}^d$ into itself. Then $u \geq 0$ on $(0,T) \times \mathbb{R}^d$.
\end{Lemma}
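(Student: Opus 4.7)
The plan is to run a classical maximum–principle argument after perturbing $u$ by a positive quantity in order to turn the weak inequality into a strict one, and so as to kill any ambiguity at a possible first touching point.

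First I would introduce the perturbation $u_{\varepsilon}(t,x) = u(t,x) + \varepsilon(1+t)$ with $\varepsilon > 0$. The point is that the additive correction $\varepsilon(1+t)$ is \emph{spatially constant}, so $\nabla_x u_{\varepsilon} = \nabla_x u$ and $\nabla_x^2 u_{\varepsilon} = \nabla_x^2 u$, while the nonlocal difference collapses: $u_{\varepsilon}(t,x) - u_{\varepsilon}(t,f(x)) = u(t,x) - u(t,f(x))$. Using that $A(t,x,\cdot)$ and $B(t,x,\cdot)$ are linear (hence vanish at $0$), the hypothesis on $u$ yields
\[
\partial_t u_{\varepsilon} - A(t,x,\nabla_x^2 u_{\varepsilon}) + B(t,x,\nabla_x u_{\varepsilon}) + \alpha(t,x)\bigl(u_{\varepsilon} - u_{\varepsilon}(t,f(x))\bigr) \;\geq\; \varepsilon \;>\; 0,
\]
and moreover $u_{\varepsilon}(0,\cdot) \geq \varepsilon > 0$. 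It then suffices to show $u_{\varepsilon} \geq 0$, as letting $\varepsilon \to 0$ concludes.

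Next I would define $t^{*} := \inf\bigl\{\,t \in (0,T) \,:\, \inf_{x \in \mathbb{R}^d} u_{\varepsilon}(t,x) \leq 0 \bigr\}$, with the convention that $t^{*} = T$ if the set is empty. Assuming suitable growth of $u$ (so that the infimum in $x$ is attained at each $t$), continuity forces $t^{*} > 0$ and, if $t^{*} < T$, the existence of a point $x^{*} \in \mathbb{R}^d$ at which $u_{\varepsilon}(t^{*},x^{*}) = 0$ while $u_{\varepsilon}(t^{*},\cdot) \geq 0$ on $\mathbb{R}^d$ and $u_{\varepsilon}(t,\cdot) \geq 0$ for $t < t^{*}$. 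At $(t^{*},x^{*})$ we have the classical first-order conditions $\partial_t u_{\varepsilon} \leq 0$, $\nabla_x u_{\varepsilon} = 0$, $\nabla_x^2 u_{\varepsilon} \succeq 0$. By uniform ellipticity and the linearity of $A$, this gives $-A(t^{*},x^{*},\nabla_x^2 u_{\varepsilon}) \leq 0$; by linearity of $B$, $B(t^{*},x^{*},0) = 0$; and since $\alpha \geq 0$ with $u_{\varepsilon}(t^{*},f(x^{*})) \geq 0$,
\[
\alpha(t^{*},x^{*})\bigl(u_{\varepsilon}(t^{*},x^{*}) - u_{\varepsilon}(t^{*},f(x^{*}))\bigr) \;=\; -\alpha(t^{*},x^{*})\, u_{\varepsilon}(t^{*},f(x^{*})) \;\leq\; 0.
\]
Summing these four contributions gives that the left-hand side of the differential inequality is $\leq 0$ at $(t^{*},x^{*})$, contradicting the strict lower bound $\geq \varepsilon > 0$.

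The main technical obstacle is that $\mathbb{R}^d$ is noncompact, so the infimum of $u_{\varepsilon}(t,\cdot)$ need not be attained without some decay or boundedness assumption on $u$. In the uses of the lemma made in the body of the paper the function $u$ is either bounded or has controlled growth, so this is not an issue; in general, one would replace the additive perturbation by $\varepsilon(1+t)\phi(x)$ with a weight $\phi$ growing at infinity (e.g.\ $\phi(x) = \cosh(\mu|x|)$ for small $\mu > 0$), at the cost of extra terms coming from $A(\nabla^2 \phi)$, $B(\nabla \phi)$, and $\alpha(\phi(x)-\phi(f(x)))$; one then chooses $\mu$ small enough, and $\varepsilon$ with suitable time-exponential weight, so that the resulting strict inequality still holds and the contradiction at the first touching point goes through unchanged.
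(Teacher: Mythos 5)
Your proof is correct and follows essentially the same route as the paper's: you perturb by the spatially constant term $\varepsilon(1+t)$ to make the inequality strict and then derive a contradiction at a first touching point, using $\nabla_x u_\varepsilon=0$, $\nabla_x^2 u_\varepsilon\succeq 0$, $\partial_t u_\varepsilon\le 0$ and the sign of the nonlocal term, while the paper's proof does the same with $v=e^{-\lambda t}u+\delta(t+1)$ (your argument is exactly the case $\lambda=0$). Both arguments share the same implicit assumption that the touching point exists on the unbounded domain $\mathbb{R}^d$, a point the paper glosses over and you at least acknowledge explicitly.
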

\begin{proof}
Let us assume that there exists $(t_0, x_0) \in (0,T)\times \mathbb{R}^d$ such that $u(t_0, x_0) < 0$. Thus, for any $\lambda \geq 0$, there exists $\delta > 0$ such that 
\begin{equation}\label{ld}
e^{-\lambda t_0}u(t_0, x_0) + \delta t_0 < 0.
\end{equation}
We define $v$ by 
\[
v(t,x) = e^{-\lambda t} u (t,x) + \delta (t +1).
\]
This function satisfies 
\begin{equation}\label{eqppmax}
\begin{cases}
\partial_t v - A(t,x, \nabla^2_x v) + e^{-\lambda t} B(t,x,\nabla_x u) + \alpha(t,x)(v - v(t,f(x))) - \lambda v - \delta - \lambda \delta (t+1) \geq 0\\
\text{ in } (0,T)\times \mathbb{R}^d \\
v(0) > 0 \text{ in } \mathbb{R}^d\\
v(t_0, x_0) < 0.
\end{cases}
\end{equation}
Thus, there exists $(t_1, x_1)$ such that :
\[
\begin{cases}
v(t_1, x_1) = 0 \\
\partial_t v(t_1, x_1) \leq 0\\
\nabla_x^2 v(t_1, x_1) \geq 0 \\
\nabla_x v(t_1, x_1) = 0\\
v(t, x) \geq 0,  \forall t \leq t_1, \forall x.
\end{cases}
\]
Because $\nabla_x v (t_1, x_1) = 0 $ implies that $\nabla_x u(t_1, x_1) = 0$, evaluating the PDE satisfied by $u$, we deduce that
\begin{equation*}\label{relationcontr}
- \delta (1 + \lambda (t_1+1)) \geq 0.
\end{equation*}
Thus we obtain the desired contradiction.
\end{proof}

\section{ Proof of Theorem \ref{amonotone} under general assumptions}

The proof of Theorem \ref{amonotone} follows the argument from \citep{lions2007cours}.  We show first the case $U_0$ and $G$ are $\alpha$ monotone, and then the case $F$ is $\alpha$ monotone.
\subsection{$U_0$ and $G$ are $\alpha$ monotone}
As we did above, we define $W$ and $Z_{\beta}$ by
\begin{equation}\label{defW}
W(t,x,\xi) = <U(t,x), \xi> ;
\end{equation}
\[
Z_{\beta}(t,x,\xi) = <\xi, \nabla_x W(t,x,\xi)> - \beta(t) |\nabla_x W(t,x,\xi)|^2;
\]
except that now $\beta$ can be a function of the time. The PDE satisfied by $Z_{\beta}$ is :

\begin{equation}\label{eqzb2}
\begin{aligned}
& \partial_t Z_{\beta} + <F(x,\nabla_{\xi} W), \nabla_x Z_{\beta}> + < D_p F (x,\nabla_{\xi} W ) \nabla_{\xi}Z_{\beta} , \nabla_{x} W> - < D_p G(x, \nabla_{\xi}W)\nabla_{\xi}Z_{\beta}, \xi>\\
&+  \lambda(Z_{\beta} - Z_{\beta}(t,Tx, T\xi - e) )\\
& = <\nabla_xG(x, \nabla_x W) \xi, \xi> - < D_pG(x, \nabla_{\xi}W) \nabla_x W, \xi> - < \nabla_x F(x, \nabla_{\xi} W) \nabla_x W, \xi> \\
&+ <D_p F(x, \nabla_{\xi}W) \nabla_x W, \nabla_x W>\\
&- 2 \beta < \nabla_x G(x, \nabla_x W)\xi, \nabla_x W> + 2 \beta < \nabla_x F( \nabla_{\xi} W) \nabla_x W, \nabla_x W>\\
& + \beta \lambda \bigg{(} |\nabla_x W|^2 - 2 < \nabla_x W(t, Tx, T \xi - e), S \nabla_x W> +  | \nabla_x W(t,Tx, T \xi -e)|^2 \bigg{)} \\
&- \frac{d}{dt} \beta |\nabla_x W|^2.
\end{aligned}
\end{equation}
This equation reduces to (using this time only the $\alpha$ monotonicity of $G$) :
\begin{equation}
\begin{aligned}
& \partial_t Z_{\beta} + <F(x,\nabla_{\xi} W), \nabla_x Z_{\beta}> + < D_p F (x,\nabla_{\xi} W ) \nabla_{\xi}Z_{\beta} , \nabla_{x} W> - < D_p G(x, \nabla_{\xi}W)\nabla_{\xi}Z_{\beta}, \xi>\\
&+  \lambda(Z_{\beta} - Z_{\beta}(t,Tx, T\xi - e) )\\
& \geq \alpha | \xi |^2 - 2 \beta < \nabla_x G(x, \nabla_x W)\xi, \nabla_x W> + 2 \beta < \nabla_x F( \nabla_{\xi} W) \nabla_x W, \nabla_x W>\\
& + \beta \lambda ( |\nabla_x W|^2 - |S\nabla_x W|^2 ) - \frac{d}{dt} \beta |\nabla_x W|^2.
\end{aligned}
\end{equation}
We then deduce that :
\begin{equation}
\begin{aligned}
& \partial_t Z_{\beta} + <F(x,\nabla_{\xi} W), \nabla_x Z_{\beta}> + < D_p F (x,\nabla_{\xi} W ) \nabla_{\xi}Z_{\beta} , \nabla_{x} W> - < D_p G(x, \nabla_{\xi}W)\nabla_{\xi}Z_{\beta}, \xi>\\
&+  \lambda(Z_{\beta} - Z_{\beta}(t,Tx, T\xi - e) )\\
& \geq \alpha | \xi |^2 -  \beta || \nabla_x G|| \cdot |\xi|^2 - \beta ||\nabla_xG||\cdot |\nabla_x W|^2 - 2 \beta || \nabla_x F( \nabla_{\xi} W)|| \cdot |\nabla_x W|^2\\
& + \beta \lambda ( |\nabla_x W|^2 - |S\nabla_x W|^2 ) - \frac{d}{dt} \beta |\nabla_x W|^2.
\end{aligned}
\end{equation}
Thus if we define $\beta$ by :
\begin{equation}
\beta(t) = \alpha e^{-(2||\nabla_xF|| + ||\nabla_x G|| + (||S||^2 -1)^+ \lambda)t};
\end{equation}
then $Z_{\beta}$ is positive for all time because of lemma \ref{max}. We then conclude the proof as we did in the case in which $U_0$, $G$ and $F$ are $\alpha$ monotone.

\subsection{ $F$ is $\alpha$ monotone}
We still define $W$ by (\ref{defW}) and we define $Z_{\beta, \gamma}$ by :
\[
Z_{\beta}(t,x,\xi) = <\xi, \nabla_x W(t,x,\xi)> - \beta(t) |\nabla_x W(t,x,\xi)|^2 + \gamma(t)|\xi|^2.
\]
Here, both $\beta$ and $\gamma$ are function of the time we shall define later. The PDE satisfied by $Z_{\beta, \gamma}$ is :

\begin{equation}
\begin{aligned}
& \partial_t Z_{\beta, \gamma} + <F(x,\nabla_{\xi} W), \nabla_x Z_{\beta, \gamma}> + < D_p F (x,\nabla_{\xi} W ) \nabla_{\xi}Z_{\beta, \gamma} , \nabla_{x} W> - < D_p G(x, \nabla_{\xi}W)\nabla_{\xi}Z_{\beta, \gamma}, \xi>\\
&+  \lambda(Z_{\beta, \gamma} - Z_{\beta, \gamma}(t,Tx, T\xi - e) )\\
& = <\nabla_xG(x, \nabla_x W) \xi, \xi> - < D_pG(x, \nabla_{\xi}W) \nabla_x W, \xi> - < \nabla_x F(x, \nabla_{\xi} W) \nabla_x W, \xi> \\
&+ <D_p F(x, \nabla_{\xi}W) \nabla_x W, \nabla_x W>\\
&- 2 \beta < \nabla_x G(x, \nabla_x W)\xi, \nabla_x W> + 2 \beta < \nabla_x F( \nabla_{\xi} W) \nabla_x W, \nabla_x W>\\
& + \beta \lambda \bigg{(} |\nabla_x W|^2 - 2 < \nabla_x W(t, Tx, T \xi - e), S \nabla_x W> +  | \nabla_x W(t,Tx, T \xi -e)|^2 \bigg{)} \\
&- \frac{d}{dt} \beta |\nabla_x W|^2  + 2 \gamma(< D_p F (x,\nabla_{\xi} W ) \xi , \nabla_{x} W> - < D_p G(x, \nabla_{\xi}W)\xi, \xi>)\\
& + \frac{d}{dt}\gamma |\xi|^2 + \lambda \gamma (|\xi|^2 - |T\xi - e|^2).
\end{aligned}
\end{equation}
Thus, we deduce the inequality :
\begin{equation}\label{ineqz3}
\begin{aligned}
& \partial_t Z_{\beta, \gamma} + <F(x,\nabla_{\xi} W), \nabla_x Z_{\beta, \gamma}> + < D_p F (x,\nabla_{\xi} W ) \nabla_{\xi}Z_{\beta, \gamma} , \nabla_{x} W> - < D_p G(x, \nabla_{\xi}W)\nabla_{\xi}Z_{\beta, \gamma}, \xi>\\
&+  \lambda(Z_{\beta, \gamma} - Z_{\beta, \gamma}(t,Tx, T\xi - e) )\\
& \geq \alpha | \nabla_x W |^2 -  \beta || \nabla_x G|| \cdot |\xi|^2 - \beta ||\nabla_xG||\cdot |\nabla_x W|^2 - 2 \beta || \nabla_x F( \nabla_{\xi} W)|| \cdot |\nabla_x W|^2\\
& + \beta \lambda ( |\nabla_x W|^2 - |S\nabla_x W|^2 ) - \frac{d}{dt} \beta |\nabla_x W|^2 - \gamma (||D_pF|| - 2||D_p G||) |\xi|^2\\
& - \gamma ||D_p F||\cdot |\nabla_x W |^2 + \frac{d}{dt}\gamma |\xi|^2 + \lambda \gamma (|\xi|^2 - |T\xi - e|^2).
\end{aligned}
\end{equation}
Hence, if $\beta$ and $\gamma$ satisfies
\begin{equation}
\begin{cases}
\alpha - \beta [||\nabla_x G|| + 2 ||\nabla_x F|| - \lambda (1 - ||S||^2)] - \frac{d}{dt}\beta - \gamma ||D_pF|| \geq 0 ;\\
\frac{d}{dt} \gamma + \gamma[ \lambda(1 - ||S||^2) - ||D_pF|| - 2 ||D_pG||] - \beta ||\nabla_x G|| \geq 0;
\end{cases}
\end{equation}
then the right hand side of (\ref{ineqz3}) is positive for all time $t \in (0, t_f)$. Moreover let us remark that, at the initial time, because $U_0$ is monotone : 
\[
Z_{\beta, \gamma} \geq (\gamma(0) - \beta(0)||D_xU_0||^2) |\xi|^2.
\]
Thus if we define $\beta$ and $\gamma$ by
\begin{equation}
\begin{cases}
\frac{d}{dt}\beta = \alpha - \beta [||\nabla_x G|| + 2 ||\nabla_x F|| + \lambda (1 - ||S||^2)^-]  - \gamma ||D_pF||;\\
\frac{d}{dt} \gamma = \gamma[ -\lambda(1 - ||S||^2)^- + ||D_pF|| + 2 ||D_pG||] + \beta ||\nabla_x G||;\\
\beta(0) > 0 ; \gamma(0) = \beta(0)||D_x U_0||^2;
\end{cases}
\end{equation}
then $Z_{\beta, \gamma}$ is positive at $t = 0$ and the right hand side of (\ref{ineqz3}) is positive for all time. Thus, if $\beta(t) > 0$ for all $t \in [0, t_F]$, then by lemma \ref{max} we conclude that $Z_{\beta, \gamma}$ is positive for all time and the theorem follows immediately. Now let us remark that one can always choose $\beta(0)$ small enough so that for all time $t \in [0, t_f]$ :
\[
\begin{cases}
0 \leq \beta(t) \leq \alpha t + \beta(0);\\
\gamma(t) \leq \beta(0)||D_xU_0||^2 e^{(\lambda(1 - ||S||^2)^- + ||D_pF|| + 2 ||D_pG||)t} + \beta(0)||D_xU_0||^2 \alpha \frac{t^2}{2} + \beta(0)||\nabla_xG||t.
\end{cases}
\]
Thus the theorem is proved.

\end{document}